\newtheorem{theorem}{Theorem}[section]
\newtheorem{lemma}[theorem]{Lemma}
\newtheorem{corollary}[theorem]{Corollary}
\newtheorem{proposition}[theorem]{Proposition}
\theoremstyle{definition}
\newtheorem{definition}[theorem]{Definition}
\theoremstyle{remark}
\newtheorem{remark}[theorem]{Remark}
\numberwithin{equation}{section}
\newcommand{\sai} {\mbox{$\to \kern -0.50 em \to$}}
\newcommand{\nsai} {\mbox{$\not\to \kern -0.50 em \to$}}
\begin{document}
\title[SOME SIMPLIFICATIONS IN THE PRESENTATION OF COMPLEX POWER SERIES]  
{SOME SIMPLIFICATIONS IN THE PRESENTATIONS OF COMPLEX POWER SERIES AND UNORDERED SUMS} 

\author{Oswaldo Rio Branco de Oliveira}

\begin{abstract} {This text provides very easy and short proofs of some  basic properties of complex power series (addition, subtraction, multiplication, division, rearrangement, composition, differentiation, uniqueness, Taylor's series, Principle of Identity, Principle of Isolated Zeros, and Binomial Series). This is done by simplifying the usual presentation of unordered sums of a (countable) family of complex numbers. All the proofs avoid formal power series, double series, iterated series, partial series, asymptotic arguments, complex integration theory, and uniform continuity. The use of function continuity as well as epsilons and deltas is kept to a mininum.}
\end{abstract}
 
\maketitle

\hspace{- 0,6 cm}{\sl Mathematics Subject Classification: 30B10, 40B05, 40C15, 40-01, 97I30, 97I80}

\hspace{- 0,6 cm}{\sl Key words and phrases:} Power Series, Multiple Sequences, Series, Summability, Complex Analysis, Functions of a Complex Variable.

\par

\tableofcontents
 
\section{Introduction}  
The objective of this work is to provide a simplification of the theory of unordered sums of a family of complex numbers (in particular, for a countable family of complex numbers) as well as very easy proofs of basic operations and properties concerning complex power series, such as addition, scalar multiplication, multiplication, division, rearrangement, composition, differentiation (see Apostol ~\cite{APO1} and Vyborny ~\cite{VY}), Taylor's formula, principle of isolated zeros, uniqueness, principle of identity, and binomial series. We achieve our goal regarding complex power series by avoiding the following five approaches commonly used to present their basic operations and properties: (1) the one that relies on the relatively elaborated classical summability concept of an unordered family (countable or not) of vectors/numbers (see Beardon ~\cite[pp.~67--147]{BE-LIM}, Browder ~\cite[pp.~47--97]{BRO}, Morrey and Protter ~\cite[pp.~219--262]{MOPRO}, and Hirsch and Lacombe ~\cite[pp.~127--130]{HILA}); (2) the formal power series approach (see Cartan ~\cite[pp.~9--27]{CA} and Lang ~\cite[pp.~37--86]{LA}); (3) the very usual methods that employ some sort of combination of double series, iterated series, partial series, and uniform continuity (see Apostol ~\cite[pp.~371--416]{APO2} and Rudin ~\cite[pp.~69--203]{RU}); (4) the classical approach given by Knopp ~\cite[pp.~151--433]{KK}, which leads to some  quite harsh and long argumentations and calculations and employs sub-series (also called partial series in ~\cite{APO2}); and (5) the standard method that provides elementary proofs of some properties of the power series and then employs the powerful Cauchy's Integral Formula and the basic properties of holomorphic functions to derive the difficult properties of power series (see Agarwal, Perera, and Pinelas ~\cite[pp.~151--168]{APP}, Bak and Newman ~\cite[pp.~25--90]{BN}, Boas ~\cite[pp.~18--117]{BO}, Burckel ~\cite[pp.~53--190]{BU}, Busam and Freitag ~\cite[pp.~109--124]{BF}, Conway ~\cite[pp.~30--44]{CON}, Gamelin ~\cite[pp.~130--164]{GA}, and Narasimhan and Nievergelt ~\cite[pp.~3--51]{NN}). Instead of utilizing these approaches, this paper shows a simplification of the proofs of the mentioned power series operations and properties by introducing a definition of an unordered sum of a countable family of complex numbers that is equivalent to the classical definition as applied to a countable family of vectors in a complete vector space, but it is easier to manipulate in the context of complex numbers. It is important to emphasize that the definition introduced in this text is easily extendable to a sum of an uncountable family of complex numbers.

It is interesting to notice that modern authors such as Burckel ~\cite{BU}, Lang ~\cite{LA}, Newman and Bak ~\cite{BN}, 
Remmert ~\cite[pp.~109--132]{RR}, and others  stress the importance of the study of complex power series. However, many books overlook the proofs of some of these important operations and properties of power series.

The presentation of this text is very basic. In fact, we also avoid the use of the following concepts: uniform continuity, compactness, connectedness, and asymptotic expansion. Furthermore, we keep the use of epsilons and deltas to a minimum. 

The proofs in Section 7 (Power Series - Algebraic Properties) employ  neither function continuity nor complex differentiability, whereas the proofs in Section 8 (Power Series - Analytic Properties) employ at least one of these concepts.

Since every complex power series is absolutely convergent in its disk of convergence, we will focus our attention on absolutely convergent series.

\section{Preliminaries}
Let us indicate by $\mathbb N=\{0,1,2,...\}$ the set of all natural numbers,
$\mathbb Q$ the field of rational numbers, $\mathbb R$ the complete field of
the real numbers, and $\mathbb C$ the algebraically closed field of the complex
numbers. Moreover, if $z\in\mathbb C$ then we write $z=x+iy$, where $x=$
Re$(z)\in \mathbb R$ is the real part of $z$, $y=$ Im$(z)\in \mathbb R$ is the imaginary part of $z$ and $i^2=-1$. Given $z=x+iy$ in $\mathbb C$, its
conjugate is the complex number $\overline{z} = x-iy$ and its absolute value
is the non-negative real number $|z|=\sqrt{z\overline{z}}=\sqrt{x^2 + y^2}$. 

The open disk centered at a point $z_0\in \mathbb C$ with radius $r>0$ is
the set $D(z_0;r)=\{z \in \mathbb C: |z-z_0|<r\}$. Similarly, the compact disk centered at $z_0$ with radius $r\geq 0$ is the set
$\overline{D}(z_0;r)=\{z \in \mathbb C: |z-z_0|\leq r\}$. 

We say that a set $S$ is \textit{countable} if either $S$ is finite or $S$ can be put into one-to-one correspondence with $\mathbb N$. A set $S$ is countably infinite, or \textit{denumerable}, if $S$ is countable but not finite. We will use the following well-known properties: (1) every nonempty subset of $\mathbb N$ is countable; (2) every subset of a countable set is countable; (3) the union of a countable family of countable sets is countable; (4) the finite cartesian product of countable sets is a countable set.

Given $X\subset \mathbb C$, a point $p \in \mathbb C$ is an \textit{accumulation point} of $X$ if every disk $D(p;r)$, where $r>0$, contains a point of $X$ distinct of $p$.

\section{Absolutely Convergent Series and Commutativity}

\begin{definition}\label{DEF31} Given a sequence of complex numbers $(z_n)_{n\in \mathbb N}$, let us consider a series $\sum_{n=0}^{+\infty}z_n$ of complex numbers and $(s_n)_{n\in \mathbb N}$, where $s_n=z_0+ z_1 + \cdots +z_n$, the sequence of partial sums of the series. We say that the series is
\begin{itemize}
\item[$\circ$] \textit{convergent}, with sum equal $z\in \mathbb C$ (we write $\sum_{n=0}^{+\infty} z_n=z$), if the sequence $(s_n)$ converges to $z$.
\item[$\circ$] \textit{absolutely convergent} if the series $\sum_{n=0}^{+\infty}|z_n|$ is convergent.
\item[$\circ$] \textit{conditionally convergent} if it is convergent but not absolutely convergent.
\item[$\circ$] \textit{commutatively convergent} if every rearrangement (reordering) $\sum_{n=0}^{+\infty}z_{\sigma(n)}$, where $\sigma:\mathbb N\to \mathbb N$ is a bijection, is a convergent series (we will soon see that, in this case, all the rearrangements of $\sum_{n=0}^{+\infty} z_n$ have equal sums).
\item[$\circ$] \textit{divergent} if it is not convergent.
\end{itemize}
\end{definition}

A series $\sum_{n=0}^{+\infty}z_n$ is called divergent if it is not convergent. We also denote an arbitrary series $\sum_{n=0}^{+\infty}z_n$ by 
\[\sum\limits^{+\infty} z_n.\]

We say that the complex series $\sum_{n=0}^{+\infty}z_n$ is generated by the sequence $(z_n)$.

Let us consider a real series $\sum_{n=0}^{+\infty}x_n$ and its sequence of partial sums $(s_n)$.
If $\sum_{n=0}^{+\infty}x_n$ is convergent, then we write  $\sum_{n=0}^{+\infty}x_n <\infty$. If $s_n\rightarrow \pm\infty$ as $n\to +\infty$, then we write $\sum_{n=0}^{+\infty}x_n=\pm \infty$.  

\begin{remark}\label{REM32} Given a series $\sum_{n=0}^{+\infty}p_n$ (convergent or otherwise) of non-negative numbers $p_n\geq 0$, with sequence of partial sums $(s_n)$, we have  
\[\lim_{n\to +\infty}s_n= \sum_{n=0}^{+\infty}p_n\in [0,+\infty].\]
\end{remark}

\begin{theorem}\label{TEO33} Let $\sum_{n=0}^{+\infty}p_n$ be a series (convergent or otherwise) of non-negative numbers and $\sigma:\mathbb N\to \mathbb N$ be a bijection. Then,
\[\sum_{n=0}^{+\infty}p_n=\sum_{n=0}^{+\infty}p_{\sigma(n)}.\]
\end{theorem}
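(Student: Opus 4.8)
The plan is to show the two sums dominate each other. Write $S=\sum_{n=0}^{+\infty}p_n$ and $S'=\sum_{n=0}^{+\infty}p_{\sigma(n)}$; by Remark~\ref{REM32}, both exist in $[0,+\infty]$ and equal the suprema of their (monotone non-decreasing, since $p_n\geq 0$) sequences of partial sums. So it suffices to prove $S'\leq S$ and $S\leq S'$.

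First I would fix $N\in\mathbb N$ and look at the partial sum $\sum_{n=0}^{N}p_{\sigma(n)}$. The index set $\{\sigma(0),\sigma(1),\dots,\sigma(N)\}$ is a finite subset of $\mathbb N$, hence contained in $\{0,1,\dots,M\}$ for $M=\max\{\sigma(0),\dots,\sigma(N)\}$. Because every term is non-negative, discarding the missing indices only decreases the sum, so $\sum_{n=0}^{N}p_{\sigma(n)}\leq \sum_{n=0}^{M}p_n\leq S$. Since this holds for every $N$, taking the supremum over $N$ on the left gives $S'\leq S$.

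For the reverse inequality I would apply exactly the same argument to the series $\sum_{n=0}^{+\infty}p_{\sigma(n)}$ and the bijection $\sigma^{-1}:\mathbb N\to\mathbb N$, noting that $p_{\sigma(\sigma^{-1}(k))}=p_k$, which yields $S\leq S'$. Combining the two inequalities gives $S=S'$, with the understanding that this is an equality in $[0,+\infty]$.

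The argument has no real obstacle: the only point requiring a little care is that all the manipulations (dominating a finite sum by a larger finite sum, then passing to the supremum) take place in the extended reals $[0,+\infty]$, which is exactly what Remark~\ref{REM32} licenses; the non-negativity of the $p_n$ is what makes ``insert more terms'' monotone and what makes the partial-sum sequences monotone, so no $\varepsilon$-$\delta$ estimate on tails is needed.
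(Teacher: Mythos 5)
Your proposal is correct and is essentially the paper's own argument: both directions are obtained by dominating a finite partial sum of one series by the full sum of the other (using that any finite set of indices sits inside an initial segment and that non-negativity makes inserting terms monotone), then passing to the limit/supremum and invoking symmetry via the bijection and its inverse. The only difference is cosmetic — you prove $S'\leq S$ first and then apply the argument to $\sigma^{-1}$, while the paper states the inequality in the other order and says ``vice-versa.''
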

\begin{proof} Evaluating the limit as $N \to +\infty$, where $N\in \mathbb N$,  of the trivial inequality
\[\sum_{j=0}^Np_j\leq \sum_{n=0}^{+\infty}p_{\sigma(n)},\]
we see that $\sum_{j=0}^{+\infty} p_j\leq \sum_{n=0}^{+\infty}p_{\sigma(n)}$. Vice-versa, we have $ \sum_{n=0}^{+\infty}p_{\sigma(n)} \leq \sum_{j=0}^{+\infty} p_j$.
\end{proof}

\begin{definition}\label{DEF34} Let $\sum_{n=0}^{+\infty}a_n$ be a series of real numbers.  The \textit{positive and negative parts of $a_n$} are, respectively,
\begin{displaymath}
p_n\ =\ \left\{\begin{array}{ll}
a_n,\ &\textrm{if}\ a_n\ \geq \ 0\\
0,\ &\textrm{if}\ a_n\ \leq \ 0\\
\end{array}\ \ \ \ \ \ \ \ \textrm{and}\ \ \ \ \ \  q_n\ =\ \left\{\begin{array}{ll}
\ \ 0,\ &\textrm{if}\ a_n\ \geq \ 0\\
- a_n,\ &\textrm{if}\ a_n\ \leq \ 0.\\
\end{array}
\right.
\right.
\end{displaymath}
Given any $n\in \mathbb N$ we have 
\begin{displaymath}
(3.4.1)\ \ \ \ \  
\left\{\begin{array}{ll}
0\leq p_n \leq |a_n|\\
0\leq q_n \leq |a_n|
\end{array}
\right. , \ \ \ \ \ 
\left\{\begin{array}{ll}
a_n = p_n - q_n\\
|a_n| = p_n + q_n
\end{array}
\right., \ \ \ \ \textrm{and}\ \ \   \ \left\{\begin{array}{ll}
p_n = \frac{|a_n|+a_n}{2}\\ 
q_n = \frac{|a_n|-a_n}{2}.\ \ \ \ 
\end{array}
\right.
\end{displaymath}
\end{definition}
Keeping the notation in Definition \ref{DEF34} we have the next result on real series.

\begin{theorem}\label{TEO35} Let $\sum_{n=0}^{+\infty} a_n$ be an arbitrary series of real numbers. The following are true.
\begin{itemize} 
\item[(a)] $\sum_{n=0}^{+\infty} |\,a_n| = \sum_{n=0}^{+\infty} p_n + \sum_{n=0}^{+\infty} q_n$.
\item[(b)] If $\sum_{n=0}^{+\infty}a_n$ converges absolutely, then it also converges commutatively. Moreover, the series generated by the sequences $(p_n)$ and $(q_n)$ both converge and
\[  \ \sum_{n=0}^{+\infty} a_n = \sum_{n=0}^{+\infty} p_n -\sum_{n=0}^{+\infty}q_n .\] 
Furthermore, all the rearrangements of $\sum_{n=0}^{+\infty}a_n$ have equal sums.
\item[(c)] If $\sum_{n=0}^{+\infty} a_n $ converges conditionally, then we have $\sum_{n=0}^{+\infty}p_n=\sum_{n=0}^{+\infty}q_n=+\infty$.
\item[(d)]If $\sum_{n=0}^{+\infty}a_n$ converges conditionally, then there exists a divergent rearrangement of the series $\sum_{n=0}^{+\infty}a_n$.
\end{itemize}
\end{theorem}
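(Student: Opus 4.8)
The plan is to treat the four items in order, each time reducing matters to the non-negative case already settled in Remark~\ref{REM32} and Theorem~\ref{TEO33}. Item (a) is immediate from the pointwise identity $|a_n| = p_n + q_n$ in (3.4.1): summing it over $n = 0, \dots, N$ gives equality of partial sums for every $N$, and since all three series have non-negative terms, letting $N \to +\infty$ and invoking Remark~\ref{REM32} yields the claimed identity in $[0,+\infty]$. For (b), absolute convergence means $\sum|a_n| < \infty$, so by (a) both $\sum p_n$ and $\sum q_n$ are finite; being series of non-negative terms, each converges in $\mathbb R$. Now fix a bijection $\sigma : \mathbb N \to \mathbb N$. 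From $a_{\sigma(n)} = p_{\sigma(n)} - q_{\sigma(n)}$ we get $\sum_{n=0}^N a_{\sigma(n)} = \sum_{n=0}^N p_{\sigma(n)} - \sum_{n=0}^N q_{\sigma(n)}$, and by Theorem~\ref{TEO33} the right-hand side tends to $\sum p_n - \sum q_n$ as $N \to +\infty$. Hence every rearrangement converges, to the common value $\sum p_n - \sum q_n$; taking $\sigma = \mathrm{id}$ identifies this value with $\sum a_n$. This delivers at once that absolute convergence implies commutative convergence, the decomposition formula, and the equality of all rearrangement sums.

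For (c), assume $\sum a_n$ converges conditionally and, for contradiction, that $\sum p_n < \infty$ (the case $\sum q_n < \infty$ being symmetric). Writing $s^a_n, s^p_n, s^q_n$ for the partial sums of $\sum a_n, \sum p_n, \sum q_n$, we have $s^q_n = s^p_n - s^a_n$, and both $s^p_n$ and $s^a_n$ converge; hence $s^q_n$ converges, that is, $\sum q_n < \infty$. Then (a) gives $\sum|a_n| = \sum p_n + \sum q_n < \infty$, contradicting that the convergence is not absolute. Thus $\sum p_n = \sum q_n = +\infty$.

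For (d), I would use a greedy blocking construction. By (c) there are infinitely many indices with $a_n > 0$ and infinitely many with $a_n < 0$ (if only finitely many terms were negative, $\sum q_n$ would be a finite sum), and every tail of the positive-term series still sums to $+\infty$. List the positive-term indices as $i_0 < i_1 < \cdots$ and the negative-term indices as $j_0 < j_1 < \cdots$ (the indices with $a_n = 0$, if any, are slipped in arbitrarily and change nothing). Define the rearrangement by: take just enough consecutive $a_{i_k}$'s so that the running sum exceeds $1$, then append $a_{j_0}$; take just enough further consecutive $a_{i_k}$'s so that the running sum exceeds $2$, then append $a_{j_1}$; and so on. Each positive block is finite because the remaining positive sum is always $+\infty$, so every positive index is eventually used; each negative index is used exactly once; hence $\sigma$ is a bijection. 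Right after the $m$-th overshoot the partial sum of $\sum a_{\sigma(n)}$ exceeds $m$, so these partial sums are unbounded and the rearrangement diverges. The steps in (a)--(c) are routine; the one place calling for a little care is (d), namely checking that the greedy construction genuinely exhausts $\mathbb N$ -- so that it is a bijection -- while the forced overshoots prevent the partial sums from settling down, and this bookkeeping is the main (mild) obstacle.
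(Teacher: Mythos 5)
Your proof is correct and follows essentially the same route as the paper: (a) from the pointwise identity $|a_n|=p_n+q_n$ and Remark~\ref{REM32}, (b) from the convergence of $\sum p_n$ and $\sum q_n$ together with Theorem~\ref{TEO33} applied to each, (c) by playing $a_n=p_n-q_n$ against (a), and (d) by a greedy rearrangement. The only difference is cosmetic: in (d) the paper alternately overshoots $1$ and undershoots $0$, so the partial sums oscillate, whereas you overshoot the increasing targets $1,2,3,\dots$ and insert one negative term per block, so the partial sums are unbounded along a subsequence; both mechanisms force divergence.
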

\begin{proof} Let us employ relations (3.4.1).
\begin{itemize} 
\item[(a)] From Remark \ref{REM32} it follows that    
\[\sum_{n=0}^{+\infty}|a_n|= \lim\limits_{m\to +\infty}\sum\limits_{n = 0}^{m} |a_n|  =  \lim_{m\to +\infty}\left[\sum\limits_{n = 0}^{m}p_n + \sum\limits_{n = 0}^{ m} q_n\right] = \sum_{n=0}^{+\infty}|p_n| + \sum_{n=0}^{+\infty}|q_n| .\]
\item[(b)] Since $0\leq p_n\leq |a_n|$ and $0\leq q _n\leq |a_n|$, we deduce that the series $\sum_{n=0}^{+\infty}p_n$ and
$\sum_{n=0}^{+\infty} q_n$ are convergent and then, by Theorem  \ref{TEO33}, both series are commutatively convergent. Therefore, we can conclude that the series given by the subtraction $\sum_{n=0}^{+\infty} p_n-\sum_{n=0}^{+\infty}q_n=\sum_{n=0}^{+\infty}a_n$ also is commutatively convergent. In fact, supposing that $\sigma:\mathbb N\to \mathbb N$ is a bijection, from Remark \ref{REM32} we obtain
\[\sum\limits^{+\infty}a_{\sigma(n)}= \sum\limits^{+\infty}p_{\sigma(n)} -\sum\limits^{+\infty}q_{\sigma(n)}= \sum\limits^{+\infty}p_n-\sum\limits^{+\infty}q_n=\sum\limits^{+\infty}a_n .\]
Thus, all the rearrangements of $\sum_{n=0}^{+\infty}a_n$ have equal sums.
\item[(c)] Since the series $\sum_{n=0}^{+\infty} a_n$ converges, from the identity $a_n=p_n-q_n$ we deduce that $\sum_{n=0}^{+\infty}p_n$ converges if and only if $\sum_{n=0}^{+\infty} q_n$ converges. However, by hypothesis we have  $\sum_{n=0}^{+\infty}|a_n|=+\infty$. Hence, from (a) it follows that at least one of the series $\sum_{n=0}^{+\infty}p_n$ and $\sum_{n=0}^{+\infty}q_n$ diverges. Thus, the series generated by the sequences $(p_n)$ and $(q_n)$ are both divergent.
\item[(d)] From (c) it follows that $\sum_{n=0}^{+\infty} p_n = \sum_{n=0}^{+\infty} q_n = +\infty$. Hence, we rearrange the series $\sum_{n=0}^{+\infty}a_n$ in the following way: at step 1, we collect the first terms $a_n\geq 0$, where $n\in \mathbb N$, whose sum is strictly bigger than $1$; at step 2, we collect the first terms $a_n<0$, where $n\in \mathbb N$, whose sum with the previously collected terms is strictly smaller than $0$; at step 3, having subtracted from $\mathbb N$ all the indices already selected, we collect the next non-negative terms $a_n\geq 0$, where $n\in \mathbb N$, whose sum with the previously collected terms is strictly bigger than $1$. Iterating this argument, we obtain a rearrangement of the original series. The sequence of partial sums of this rearrangement admits a subsequence with all terms strictly bigger than $1$, and  another subsequence with all terms strictly negative. Consequently, this rearrangement diverges.
\end{itemize}
\end{proof}

\begin{corollary}\label{COR36}  Let $\sum_{n=0}^{+\infty}a_n$ be a real series. The following are equivalent.
\begin{itemize}
\item[(a)] The series is commutatively convergent.
\item[(b)] The series is absolutely convergent.
\item[(c)] The series $\sum_{n=0}^{+\infty}p_n$ and $\sum_{n=0}^{+\infty} q_n$ are both convergent.
\end{itemize}
\end{corollary}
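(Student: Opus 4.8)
The plan is to prove the three-way equivalence by a short cycle of implications, leveraging Theorem~\ref{TEO35} and Theorem~\ref{TEO33}, so that essentially no new work is needed. I would establish (b)$\Rightarrow$(c)$\Rightarrow$(a)$\Rightarrow$(b), since each arrow is either immediate from the inequalities in (3.4.1) or from a result already proved.

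First, for (b)$\Rightarrow$(c): assume $\sum_{n=0}^{+\infty}|a_n|<\infty$. Since $0\leq p_n\leq|a_n|$ and $0\leq q_n\leq|a_n|$ for all $n$ by (3.4.1), the partial sums of $\sum p_n$ and $\sum q_n$ are monotone increasing and bounded above by $\sum_{n=0}^{+\infty}|a_n|$, hence both series converge. (Alternatively, this is contained verbatim in the first sentence of the proof of Theorem~\ref{TEO35}(b).) Next, for (c)$\Rightarrow$(a): if $\sum p_n$ and $\sum q_n$ both converge, then by Theorem~\ref{TEO33} they are both commutatively convergent, and for any bijection $\sigma:\mathbb N\to\mathbb N$ we have, using $a_n=p_n-q_n$ and limit arithmetic,
\[
\sum_{n=0}^{+\infty}a_{\sigma(n)}=\sum_{n=0}^{+\infty}p_{\sigma(n)}-\sum_{n=0}^{+\infty}q_{\sigma(n)}=\sum_{n=0}^{+\infty}p_n-\sum_{n=0}^{+\infty}q_n,
\]
which is finite and independent of $\sigma$; in particular every rearrangement converges, so the series is commutatively convergent. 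This is exactly the argument given in Theorem~\ref{TEO35}(b).

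Finally, for (a)$\Rightarrow$(b): I argue by contraposition. Suppose the series is not absolutely convergent. If it is divergent, then it is not commutatively convergent (taking $\sigma=\mathrm{id}$ shows one rearrangement diverges), so (a) fails. If it is convergent but not absolutely convergent, then it converges conditionally, and Theorem~\ref{TEO35}(d) produces a divergent rearrangement, so again (a) fails. Hence (a) implies (b), closing the cycle.

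There is no real obstacle here — the corollary is a bookkeeping consequence of Theorem~\ref{TEO35}. The only point requiring a moment's care is the (a)$\Rightarrow$(b) step, where one must remember to split into the divergent and conditionally-convergent subcases; the conditionally-convergent case is where Theorem~\ref{TEO35}(d) does the essential work. One could alternatively phrase the whole proof as: (b)$\Leftrightarrow$(c) is the content of Theorem~\ref{TEO35}(a) together with the monotonicity of partial sums of nonnegative series, (b)$\Rightarrow$(a) is Theorem~\ref{TEO35}(b), and $\neg$(b)$\Rightarrow\neg$(a) is handled by Theorem~\ref{TEO35}(d) plus the trivial divergent case; but the cyclic presentation is cleanest.
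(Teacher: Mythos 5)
Your proof is correct and follows essentially the same route as the paper's: both rest on Theorem~\ref{TEO35}(b) for passing from absolute to commutative convergence, on Theorem~\ref{TEO35}(d) (via the conditionally convergent case) for the converse, and on the inequalities (3.4.1) together with Theorem~\ref{TEO35}(a) for the equivalence with (c). The only difference is organizational: you arrange the implications as a cycle (b)$\Rightarrow$(c)$\Rightarrow$(a)$\Rightarrow$(b), while the paper proves (a)$\Rightarrow$(b), (b)$\Rightarrow$(a), and (b)$\Leftrightarrow$(c) separately.
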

\begin{proof}Let us split up the proof into three parts.
\begin{itemize} 
\item[(a)]$\Rightarrow$(b) From Theorem \ref{TEO35} (d) it follows that $\sum_{n=0}^{+\infty}a_n$ is not conditionally convergent. Hence, since $\sum_{n=0}^{+\infty}a_n$ converges, the series $\sum_{n=0}^{+\infty}|a_n|$ converges. 
\item[(b)]$\Rightarrow$(a) Follows from Theorem \ref{TEO35} (b).
\item[(b)]$\Leftrightarrow$(c)  Follows from Theorem \ref{TEO35} (a).
\end{itemize}
\end{proof}

\begin{remark}\label{REM37} Considering a complex series $\sum_{n=0}^{+\infty}z_n$, the following assertions hold. 
\begin{itemize}
\item[$\circ$] The complex series is commutatively convergent if and only if the real series $\sum_{n=0}^{+\infty}\textrm{Re}(z_n)$ and $\sum_{n=0}^{+\infty}\textrm{Im}(z_n)$ are both commutatively convergent.
\item[$\circ$] The inequalities right below are true, for every $n$ in $\mathbb N$,
\[(3.7.1)   0\leq |\textrm{Re}(z_n)|\leq |z_n|,   \   0\leq |\textrm{Im}(z_n)|\leq|z_n|,  \ \textrm{and}  \ |z_n|\leq |\textrm{Re}(z_n)| + |\textrm{Im}(z_n)| . \]
\end{itemize}
\end{remark}

Using Remark \ref{REM37} we obtain the next result.

\begin{corollary}\label{COR38} Let $\sum_{n=0}^{+\infty}z_n$ be a complex series. The following are equivalent.
\begin{itemize}
\item[(a)] The series is absolutely convergent.
\item[(b)] The series $\sum_{n=0}^{+\infty} \textrm{Re}(z_n)$ and $\sum_{n=0}^{+\infty} \textrm{Im}(z_n)$ are both absolutely convergent.
\item[(c)] The series is commutatively convergent.
\end{itemize}
\end{corollary}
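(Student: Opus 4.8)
The plan is to reduce everything to the real case already settled in Corollary \ref{COR36}, so that no fresh summation argument is required; I would present the proof as the cycle of equivalences (a) $\Leftrightarrow$ (b) $\Leftrightarrow$ (c). The first equivalence is a pure comparison argument based on the inequalities (3.7.1), and the second is obtained by applying the real‑variable corollary to the series of real and imaginary parts and then invoking the componentwise description of commutative convergence in Remark \ref{REM37}.

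For (a) $\Leftrightarrow$ (b): assuming absolute convergence, I would sum the left‑hand inequalities in (3.7.1), namely $\sum_{n=0}^{N}|\mathrm{Re}(z_n)|\le\sum_{n=0}^{N}|z_n|$ and the analogous one for imaginary parts, and let $N\to+\infty$ using Remark \ref{REM32}, concluding $\sum|\mathrm{Re}(z_n)|<\infty$ and $\sum|\mathrm{Im}(z_n)|<\infty$. Conversely, if both of those nonnegative series converge, then summing the rightmost inequality $|z_n|\le|\mathrm{Re}(z_n)|+|\mathrm{Im}(z_n)|$ in (3.7.1) and passing to the limit gives $\sum|z_n|\le\sum|\mathrm{Re}(z_n)|+\sum|\mathrm{Im}(z_n)|<\infty$, which is (a). For (b) $\Leftrightarrow$ (c): by Corollary \ref{COR36} applied to each of the real series $\sum\mathrm{Re}(z_n)$ and $\sum\mathrm{Im}(z_n)$, each one is absolutely convergent precisely when it is commutatively convergent, so (b) is equivalent to the assertion that both $\sum\mathrm{Re}(z_n)$ and $\sum\mathrm{Im}(z_n)$ are commutatively convergent; by the first bullet of Remark \ref{REM37} this is exactly (c). Chaining the two equivalences finishes the argument.

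I do not anticipate a real obstacle, since the hard content has already been isolated in the real case; the only points needing care are that a single bijection $\sigma:\mathbb N\to\mathbb N$ rearranges $\sum z_n$ and its real and imaginary parts simultaneously, and that convergence in $\mathbb C$ is equivalent to convergence of the real and imaginary parts. Both of these are $\mathbb R$‑linearity and continuity statements for $\mathrm{Re}$ and $\mathrm{Im}$ (so they commute with partial sums and with limits), and they are precisely what underlies Remark \ref{REM37}; if a self‑contained write‑up were wanted I would record this one‑line observation before starting the equivalences.
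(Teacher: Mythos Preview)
Your proposal is correct and follows exactly the approach the paper takes: the paper's proof consists of the single line ``It follows from Remark \ref{REM37} and Corollary \ref{COR36},'' and what you have written is precisely a fleshed-out version of that sentence. The only difference is that you supply the comparison-test details behind the inequalities (3.7.1), which the paper leaves implicit.
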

\begin{proof} It follows from Remark \ref{REM37} and Corollary \ref{COR36}.
\end{proof}

\section{Unordered Countable Sums and Commutativity}

In this section, we present a definition of the value of a series of complex numbers that does not depend on the order of its terms. This will be useful to define the sum of a countable family in $\mathbb C$ and, in particular, the sum of a sequence in $\mathbb C$.

\begin{theorem}\label{TEO41} Let $\sum_{n=0}^{+\infty} p_n$ be a series of non-negative terms. Then, we have 
\[ \sum_{n=0}^{+\infty} p_n  = \rho,  \ \textrm{with}\  \rho = \sup\left\{ \sum\limits_{n\in F}\,p_n: F\subset \mathbb N\ \textrm{and}\ F\ \textrm{finite} \right\} \in  [0,+\infty] .\] 
\end{theorem}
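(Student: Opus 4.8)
The plan is to exploit monotonicity of the partial sums together with Remark \ref{REM32}. Write $s_m=p_0+p_1+\cdots+p_m$ for the $m$-th partial sum. Since each $p_n\geq 0$, the sequence $(s_m)_{m\in\mathbb N}$ is non-decreasing in $[0,+\infty)$, so by Remark \ref{REM32} we have $\sum_{n=0}^{+\infty}p_n=\lim_{m\to+\infty}s_m=\sup_{m\in\mathbb N}s_m\in[0,+\infty]$. Thus the theorem reduces to showing $\sup_{m\in\mathbb N}s_m=\rho$, which I would prove by two inequalities.

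First I would show $\sup_m s_m\leq\rho$. For each fixed $m$, the set $F=\{0,1,\ldots,m\}$ is a finite subset of $\mathbb N$ and $s_m=\sum_{n\in F}p_n$, so $s_m$ belongs to the set over which the supremum defining $\rho$ is taken; hence $s_m\leq\rho$. Taking the supremum over $m$ gives $\sup_m s_m\leq\rho$.

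Conversely, I would show $\rho\leq\sup_m s_m$. Let $F\subset\mathbb N$ be any finite subset. If $F=\varnothing$ then $\sum_{n\in F}p_n=0\leq s_0\leq\sup_m s_m$. Otherwise put $m=\max F$, so that $F\subseteq\{0,1,\ldots,m\}$; since all terms are non-negative, $\sum_{n\in F}p_n\leq\sum_{n=0}^{m}p_n=s_m\leq\sup_m s_m$. As this holds for every finite $F$, we get $\rho=\sup\{\sum_{n\in F}p_n: F\subset\mathbb N\text{ finite}\}\leq\sup_m s_m$. Combining the two inequalities yields $\sum_{n=0}^{+\infty}p_n=\sup_m s_m=\rho$.

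There is essentially no hard step here; the only things to be careful about are the bookkeeping with values in the extended half-line $[0,+\infty]$ (so that ``$\leq$'' and ``$\sup$'' make sense even in the divergent case) and the trivial handling of the empty index set $F$. Both are routine, so the argument is short.
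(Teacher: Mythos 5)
Your proof is correct and follows essentially the same route as the paper's: both identify each partial sum $s_m$ with the finite-subset sum over $\{0,1,\ldots,m\}$ to get $\sup_m s_m\leq\rho$, and bound an arbitrary finite $F$ by $s_{\max F}$ for the reverse inequality. The only cosmetic difference is that you explicitly treat the empty set $F$, which the paper sidesteps by restricting to nonempty $F$.
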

\begin{proof} Let $(s_m)$ be the (increasing) sequence of partial sums of the given series and $F$ be an arbitrary nonempty finite subset of $\mathbb N$. From the inequalities
\[ \sum_{m\in F}p_m \leq s_{\max(F)}\leq \sum_{n=0}^{+\infty}p_n \ \textrm{and} \ s_m=\sum_{k\in \{0,1,\ldots ,m\}} p_k\leq \rho \]
we deduce that $\rho =\sup\{\sum_{m\in F}p_m: F \textrm{ is a finite subset of}\ \mathbb N\} \leq\sum_{n=0}^{+\infty}p_n = \lim\limits_{m\to +\infty} s_m  \leq \rho$.
\end{proof}

Henceforth, $\mathbb K$ is either the field $\mathbb R$ or the field $\mathbb C$.

\begin{definition}\label{DEF42} Let $J$ be an arbitrary countable index set and $\mathbb K$ fixed.
\begin{itemize}
\item[$\circ$] A \textit{family} in $\mathbb K$, indexed by $J$, is a function $x: J \to \mathbb K$. We denote this family by $(x_j)_{_J}$, where $x_j = x(j)$ is the $j$-term of the family, for each $ j$ in $J$. We also denote the family $(x_j)_J$ as, briefly, $(x_j)$.
\item[$\circ$] Given two families $(x_j)_{_J}$ and $(y_j)_{_J}$, and $\lambda$ in $\mathbb K$, we define the addition and the scalar multiplication as, respectively,
\[ (x_j)_{_J} + (y_j)_{_J}=(x_j + y_j)_{_J}\ \ \textrm{and}\ \ \lambda(x_j)_{_J}=(\lambda x_j)_{_J}.\]
\end{itemize}
\end{definition}

Clearly, every sequence is a family. From now on $J$ and $L$ denote countable index sets. If $J=\mathbb N\times \mathbb N = {\mathbb N}^2$,  then $x:\mathbb N\times \mathbb N \to \mathbb K$ is a double sequence.

\vspace{0,2 cm}

Based on Theorem \ref{TEO41} we present the following notations and definitions.

\begin{definition}\label{DEF43} Given a family $(p_{\,j})_{j\in J}$ of non-negative terms, we put 
\[ \sum p_j = \sup\left\{ \sum\limits_{j\in F}\,p_j: F\subset J\ \textrm{and}\ F\ \textrm{finite}  \right\}  .\]
\begin{itemize}
\item[$\circ$]  If $ \sum p_j  < \infty $, then we say that $(p_j)_J$ is a \textit{summable family} (or, briefly, summable) with sum $\sum p_{\,j}$, also denoted by $\sum_{J} p_{\,j}$ or $\sum_{j \in J}  p_{\,j}$.
\item[$\circ$] If $J=\mathbb N$ and $(p_n)_{\mathbb N}$ is summable, then we call $(p_n)$ a \textit{summable sequence}.
\end{itemize}
\end{definition}

\begin{remark}\label{REM44} Let us consider $(p_j)_J$ a countably infinite family of non-negative terms.
\begin{itemize}
\item[$\circ$] Let  $\sigma:L\to J$ be a bijection. It is trivial to verify that  
\[\sum_J p_j=\sum_L p_{\sigma(l)}.\]
Thus, $(p_j)_J$ is summable if and only if $(p_{\sigma(l)})_L$ is summable. 
\item[$\circ$] If $J=\mathbb N$, then from Theorem \ref{TEO41} we may conclude that $\sum p_n=\sum_{n=0}^{+\infty}p_n$.
\end{itemize}
\end{remark}

\begin{corollary}\label{COR45} Let $(p_n)$ be a  sequence of non-negative terms. Then, $(p_n)$ is summable if and only if $\sum_{n=0}^{+\infty} p_n$ is convergent. If $(p_n)$ is summable, then we have
\[\sum p_n\ =\ \sum\limits^{+\infty}p_n.\]
\end{corollary}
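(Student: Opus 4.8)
The plan is to show that both halves of the statement follow immediately from Theorem \ref{TEO41} once the relevant suprema are matched up, so that no genuinely new argument is needed; in fact the equality is already recorded in the second bullet of Remark \ref{REM44}, and only the ``summable $\iff$ convergent'' clause needs to be spelled out.

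First I would unwind Definition \ref{DEF43} in the special case $J=\mathbb{N}$: by definition
\[\sum p_n \;=\; \sup\left\{\sum_{n\in F}p_n \,:\, F\subset \mathbb{N}\ \text{and}\ F\ \text{finite}\right\},\]
which is exactly the number $\rho$ appearing in Theorem \ref{TEO41}. That theorem asserts $\sum_{n=0}^{+\infty}p_n=\rho$ as elements of $[0,+\infty]$; hence, unconditionally,
\[\sum p_n \;=\; \rho \;=\; \sum_{n=0}^{+\infty}p_n .\]
This already yields the displayed equality whenever $(p_n)$ is summable.

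Next I would settle the equivalence. Since the terms $p_n$ are non-negative, the sequence of partial sums $(s_m)$ is increasing, and by Remark \ref{REM32} its limit equals $\sum_{n=0}^{+\infty}p_n\in[0,+\infty]$; an increasing real sequence converges precisely when it is bounded above, so $\sum_{n=0}^{+\infty}p_n$ is convergent if and only if $\sum_{n=0}^{+\infty}p_n<\infty$. On the other hand, by Definition \ref{DEF43}, $(p_n)$ is summable if and only if $\sum p_n<\infty$. Feeding in the identity $\sum p_n=\sum_{n=0}^{+\infty}p_n$ from the first step, these two conditions coincide, which is the asserted equivalence.

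There is essentially no obstacle: the whole content is contained in Theorem \ref{TEO41} plus the elementary fact that a monotone sequence converges iff it is bounded (Remark \ref{REM32}). The only point requiring a little care is the bookkeeping with the value $+\infty$ — that is, being careful to compare the suprema and limits in $[0,+\infty]$ rather than merely in $\mathbb{R}$ — but nothing more subtle than that is involved.
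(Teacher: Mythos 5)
Your proof is correct and follows exactly the route the paper intends: the paper's one-line proof defers to Remark \ref{REM44}, whose second bullet is precisely the identification of $\sum p_n$ with the supremum $\rho$ of Theorem \ref{TEO41} and hence with $\sum_{n=0}^{+\infty}p_n$ in $[0,+\infty]$. You have merely spelled out the details (including the monotone-convergence bookkeeping for the equivalence) that the paper leaves implicit.
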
 
\begin{proof} It can be deduced from Remark \ref{REM44}.
\end{proof}

\begin{corollary}\label{COR46} Let $(p_{\,j})_J$ be a countably infinite family of non-negative terms and $\sigma:\mathbb N \to J$ be a bijection. Then, $(p_{\,j})_J$ is a summable family if and only if the series $\sum_{n=0}^{+\infty} p_{\sigma(n)}$ is convergent. If $(p_j)$ is summable, then we have 
\[\sum p_{\,j}\ =\ \sum\limits^{+\infty}p_{\sigma(n)}.\] 
\end{corollary}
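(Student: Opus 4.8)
The plan is to chain together the two preceding results, Remark \ref{REM44} and Corollary \ref{COR45}, since the statement is essentially a transport of Corollary \ref{COR45} from the index set $\mathbb N$ to an arbitrary countably infinite $J$ via the given bijection $\sigma$. First I would set $L=\mathbb N$ in the first bullet of Remark \ref{REM44}: since $\sigma:\mathbb N\to J$ is a bijection and $(p_j)_J$ is a family of non-negative terms, Remark \ref{REM44} gives at once that $(p_j)_J$ is summable if and only if the sequence $(p_{\sigma(n)})_{n\in\mathbb N}$ is summable, and moreover, whenever either side holds,
\[ \sum_J p_j \ =\ \sum_{n\in\mathbb N} p_{\sigma(n)}. \]

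Next I would observe that $(p_{\sigma(n)})_{n\in\mathbb N}$ is a \emph{sequence} of non-negative terms, so Corollary \ref{COR45} applies to it verbatim: $(p_{\sigma(n)})$ is summable if and only if the series $\sum_{n=0}^{+\infty} p_{\sigma(n)}$ is convergent, and in that case $\sum p_{\sigma(n)} = \sum^{+\infty} p_{\sigma(n)}$. Combining the two equivalences yields that $(p_j)_J$ is summable precisely when $\sum_{n=0}^{+\infty} p_{\sigma(n)}$ converges; and in the summable case, stringing together the two displayed equalities gives $\sum p_j = \sum_{n\in\mathbb N} p_{\sigma(n)} = \sum^{+\infty} p_{\sigma(n)}$, which is the asserted identity.

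There is no real obstacle here: the content has already been done in Theorem \ref{TEO41}, Remark \ref{REM44}, and Corollary \ref{COR45}, and this corollary merely records their composition. The only point requiring a moment's care is the bookkeeping of which index set plays the role of the ``$L$'' in Remark \ref{REM44} and the verification that $(p_{\sigma(n)})$ inherits non-negativity so that Corollary \ref{COR45} is legitimately invoked; both are immediate.
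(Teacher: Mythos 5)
Your proof is correct and is exactly the argument the paper intends: the paper's own proof simply cites Remark \ref{REM44} and Corollary \ref{COR45}, and your write-up spells out how the bijection $\sigma$ transports the sum from $J$ to $\mathbb N$ before applying Corollary \ref{COR45} to the non-negative sequence $(p_{\sigma(n)})$. No gaps.
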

\begin{proof}  It can be deduced from Remark \ref{REM44} and Corollary \ref{COR45}. 
\end{proof}

\begin{remark} Given a series $\sum_{n=0}^{+\infty}p_n$ of non-negative terms, either convergent or divergent (in such a case, diverging to $+\infty$), the identity $\sum_{n=0}^{+\infty} p_n =\sum p_n$ holds. Thus, for a series of non-negative terms we can freely use the notation $\sum p_n$ to denote a series (see Definition \ref{DEF31}) or an unordered sum (see Definition \ref{DEF43}).  
\end{remark}

\begin{definition}\label{DEF48} Let $J$ be a countable indexing set.
\begin{itemize}
\item[$\circ$] A family $(x_j)_J$ of real numbers is \textit{summable} if the families $(p_j)_J$ and $(q_j)_J$ of the positive and negative parts of $x_j$, $j \in J$ (see Definition \ref{DEF34}), respectively, are summable. If $(x_j)_J$ is summable, then its \textit{unordered sum} is 
\[\sum x_j  = \sum p_{\,j}  - \sum q_{\,j}.\]
\item[$\circ$] A family $(z_j)_J$ of complex numbers is \textit{summable} if the families $\big(\textrm{Re}(z_j)\big)_J$ and $\big(\textrm{Im}(z_j)\big)_J$ of the real and imaginary parts of $z_j$, with $j\in J$,  respectively, are summable. If $(z_j)_J$ is summable, then its \textit{unordered sum} is
\[\sum z_j = \sum\textrm{Re}(z_j) + i \sum\textrm{Im}(z_j) .\]
\item[$\circ$] A family $(z_j)_J$, either in $\mathbb R$ or in $\mathbb C$, is \textit{absolutely summable} if the family $(\,|z_j|\,)_J$  is summable. In other words, $(z_j)_J$ is absolutely summable if
\[\sum|z_j| < \infty .\]
\end{itemize}
\end{definition}

\begin{remark} ~\label{REM49} We also denote $\sum z_j$ by the notations $\sum_J z_j$ and $\sum_{j\in J}z_j$.
\end{remark}

\begin{lemma}\label{LE410} Let $(z_j)_J$ be a denumerable family in $\mathbb C$ and $\sigma:\mathbb N\to J$ be a bijection. The following statements are equivalent:
\begin{itemize}
\item[(a)] $(z_j)_J$ is summable.
\item[(b)] $(\,z_j\,)_J$ is absolutely summable.
\item[(c)] There exists $M>0$ such that $\sum_{j\in F} |z_j|\leq M$, for all finite subset $F\subset J$.
\item[(d)] $\sum_{n=0}^{+\infty}|z_{\sigma(n)}|$ is (absolutely) convergent.
\end{itemize}
Morever, if any of the statements (a), (b), (c) or (d) holds, then
\[\sum z_j =  \sum\limits_{n=0}^{+\infty}z_{\sigma(n)} . \]
\end{lemma}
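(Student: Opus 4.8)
The plan is to establish the cycle of implications (a) $\Rightarrow$ (b) $\Rightarrow$ (c) $\Rightarrow$ (d) $\Rightarrow$ (b), and then (b) $\Rightarrow$ (a), together with the final formula. The implications among (b), (c), (d) are essentially bookkeeping with Theorem \ref{TEO41}, Corollary \ref{COR45}, and Remark \ref{REM44}: statement (c) says exactly that the set of finite partial sums of $(\,|z_j|\,)_J$ is bounded, which by Definition \ref{DEF43} and Theorem \ref{TEO41} is equivalent to $\sum |z_j| < \infty$, i.e.\ to (b); and transporting along the bijection $\sigma$ via Remark \ref{REM44} and Corollary \ref{COR45} shows this is equivalent to the convergence of $\sum_{n=0}^{+\infty} |z_{\sigma(n)}|$, which is (d).

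First I would prove (b) $\Leftrightarrow$ (a). The inequalities (3.7.1) give $0 \le |\textrm{Re}(z_j)| \le |z_j|$ and $0 \le |\textrm{Im}(z_j)| \le |z_j|$, so if $(\,|z_j|\,)_J$ is summable then, comparing finite partial sums and using Theorem \ref{TEO41}, the families $(\,|\textrm{Re}(z_j)|\,)_J$ and $(\,|\textrm{Im}(z_j)|\,)_J$ are summable; by relations (3.4.1) the positive and negative parts of $\textrm{Re}(z_j)$ are each bounded by $|\textrm{Re}(z_j)|$, hence the families of those parts are summable, so $(\textrm{Re}(z_j))_J$ is summable by Definition \ref{DEF48}, and likewise $(\textrm{Im}(z_j))_J$; thus $(z_j)_J$ is summable, giving (b) $\Rightarrow$ (a). Conversely, if $(z_j)_J$ is summable then by Definition \ref{DEF48} the four families of positive/negative parts of $\textrm{Re}(z_j)$ and $\textrm{Im}(z_j)$ are summable; adding finite partial sums and using $|\textrm{Re}(z_j)| = p_j + q_j$ and $|z_j| \le |\textrm{Re}(z_j)| + |\textrm{Im}(z_j)|$ together with Theorem \ref{TEO41} shows $\sum |z_j| < \infty$, i.e.\ (a) $\Rightarrow$ (b).

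For the final formula, once absolute summability holds, write $z_j = \textrm{Re}(z_j) + i\,\textrm{Im}(z_j)$ and $\textrm{Re}(z_j) = p_j^{(1)} - q_j^{(1)}$, $\textrm{Im}(z_j) = p_j^{(2)} - q_j^{(2)}$ in terms of positive and negative parts. By Definition \ref{DEF48}, $\sum z_j = \big(\sum p_j^{(1)} - \sum q_j^{(1)}\big) + i\big(\sum p_j^{(2)} - \sum q_j^{(2)}\big)$. Each of these four unordered sums of a non-negative family equals, by Corollary \ref{COR46}, the corresponding series $\sum_{n=0}^{+\infty} p_{\sigma(n)}^{(k)}$ or $\sum_{n=0}^{+\infty} q_{\sigma(n)}^{(k)}$, all of which converge since they are dominated by $\sum_{n=0}^{+\infty} |z_{\sigma(n)}| < \infty$. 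Then Theorem \ref{TEO35}(b) applied to the real series $\sum_{n=0}^{+\infty} \textrm{Re}(z_{\sigma(n)})$ and $\sum_{n=0}^{+\infty} \textrm{Im}(z_{\sigma(n)})$ recombines these as $\sum_{n=0}^{+\infty} \textrm{Re}(z_{\sigma(n)})$ and $\sum_{n=0}^{+\infty} \textrm{Im}(z_{\sigma(n)})$, and since $\textrm{Re}(z_{\sigma(n)}) + i\,\textrm{Im}(z_{\sigma(n)}) = z_{\sigma(n)}$ the real and imaginary parts assemble to $\sum_{n=0}^{+\infty} z_{\sigma(n)}$, as desired.

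I do not expect a serious obstacle here: every ingredient is already in place, and the argument is a matter of organizing the equivalences and carefully peeling apart real/imaginary and positive/negative parts. The one point requiring minor care is making sure, when passing from $\sum |z_j| < \infty$ to the summability of the four non-negative component families (and back), that the finite-partial-sum comparisons are applied in the correct direction, invoking Theorem \ref{TEO41} to translate between "bounded finite partial sums" and "summable"; this is routine but is the place where a sloppy write-up could go wrong.
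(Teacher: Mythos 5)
Your proposal is correct and follows essentially the same route as the paper: the equivalence of (b), (c), (d) via Theorem \ref{TEO41}, Remark \ref{REM44}, and Corollaries \ref{COR45}--\ref{COR46}; the equivalence (a) $\Leftrightarrow$ (b) via the two-sided comparison of $|z_j|$ with the four non-negative families of positive and negative parts of $\mathrm{Re}(z_j)$ and $\mathrm{Im}(z_j)$ using (3.4.1) and (3.7.1); and the final formula by converting each of the four unordered sums into a series along $\sigma$ and reassembling. No gaps.
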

\begin{proof} Let us consider the real families $\big(\textrm{Re}(z_j)\big)_J$ and $\big(\textrm{Im}(z_j)\big)_J$. We also consider the families of their positive parts  $(p_j)_J$ and $(P_j)_J$, respectively, and the families of their negative parts $(q_j)_J$ and $(Q_j)_J$, respectively. 
\begin{itemize}
\item[(a)]$\Leftrightarrow$(b) By employing 3.4.1 and 3.7.1, we see that for every $j\in J$ we have  
\[ 0 \leq   \max\big\{p_j,q_j,P_j,Q_j\big\} \leq  |z_j| \leq  p_j+q_j+P_j+Q_j .\]  
Thus, the sum $\sum |z_j|$ is finite if and only if the four sums $\sum p_j$, $\sum q_j$, $\sum P_j$, and $\sum Q_j$ are finite. Therefore, the family $(|z_j|)_J$ is summable if and only if the family $(z_j)_J$ is summable.
\item[(b)]$\Leftrightarrow$(c) It is straightforward.
\item[(b)]$\Leftrightarrow$(d) It follows from Corollary \ref{COR46}.
\end{itemize}
Finally, let us suppose that at least one of (a), (b), (c) or (d) is true. From the Definition \ref{DEF48} we can deduce the identities
$\sum z_j = \sum\textrm{Re}(z_j) + i\sum\textrm{Im}(z_j)$, $\sum\textrm{Re}(z_j)= \sum p_j - \sum q_j$, and $\sum\textrm{Im}(z_j)= \sum P_j - \sum Q_j$. Hence, employing Corollary \ref{COR46} we obtain
\begin{displaymath}
\begin{array}{ll}
\sum p_j =\sum\limits_{n=0}^{+\infty}p_{\sigma(n)},\   \sum q_j = \sum\limits_{n=0}^{+\infty}q_{\sigma(n)}, \    \sum P_j = \sum\limits_{n=0}^{+\infty}P_{\sigma(n)},  \  \textrm{and} \    \sum Q_j=\sum\limits_{n=0}^{+\infty}Q_{\sigma(n)} .
\end{array}
\end{displaymath}
Combining these four identities we conclude that
\begin{displaymath}
\begin{array}{ll}
\sum z_j &=\left[\sum\limits_{n=0}^{+\infty} p_{\sigma(n)}\, -\, \sum\limits_{n=0}^{+\infty}q_{\sigma(n)}\right] \ +\ i\left[\sum\limits_{n=0}^{+\infty} P_{\sigma(n)}\, -\, \sum\limits_{n=0}^{+\infty}Q_{\sigma(n)}\right]\\
\\
&= \sum\limits_{n=0}^{+\infty}\textrm{Re}[z_{\sigma(n)}]\, + \, i\sum\limits_{n=0}^{+\infty}\textrm{Im}[z_{\sigma(n)}] = \sum\limits_{n=0}^{+\infty}z_{\sigma(n)}.
\end{array}
\end{displaymath}
\end{proof}

\begin{remark} Lemma \ref{LE410} implies that the definition of a summable denumerable family employed in this text is equivalent to the classical definition, as applied to a denumerable family of complex numbers (see Beardon ~\cite[pp.~67--68]{BE-LIM}, Browder ~\cite[pp.~47--57]{BRO}, Morrey and Protter ~\cite[pp.~241--262]{MOPRO}, and Hirsch and Lacombe ~\cite[p.~127]{HILA}). In fact, both definitions are equivalent to Lemma ~\ref{LE410} (d).  
\end{remark}

\begin{corollary}\label{COR412} Let $(z_j)_J$ be a summable family and $L$ be a subset of $J$. Then, the family $(z_l)_{l\in L}$ is also summable.
\end{corollary}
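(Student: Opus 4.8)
The plan is to reduce everything to the characterization of summability given by Lemma \ref{LE410}, specifically the boundedness condition (c), which is manifestly inherited by subfamilies. First I would invoke the hypothesis that $(z_j)_J$ is summable; by Lemma \ref{LE410} (a)$\Rightarrow$(c) there exists $M>0$ such that $\sum_{j\in F}|z_j|\leq M$ for every finite $F\subset J$. Now let $(z_l)_{l\in L}$ be the subfamily indexed by $L\subset J$. Since $L$ is a subset of the countable set $J$, it is itself countable (property (2) of Section 2).

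Next I would verify the boundedness condition for the subfamily. Any finite subset $G\subset L$ is in particular a finite subset of $J$, so $\sum_{l\in G}|z_l|=\sum_{j\in G}|z_j|\leq M$. Thus the same constant $M$ works, and the family $(|z_l|)_{l\in L}$ satisfies condition (c) of Lemma \ref{LE410} relative to the index set $L$. Strictly speaking, Lemma \ref{LE410} as stated concerns a denumerable family together with a chosen bijection $\sigma:\mathbb N\to J$, so I should handle the two cases according to whether $L$ is finite or denumerable: if $L$ is finite, $(z_l)_{l\in L}$ is trivially summable (a finite family is always summable, directly from Definition \ref{DEF43}, since there are only finitely many finite subsets of $L$ and hence the supremum is attained and finite); if $L$ is denumerable, fix any bijection $\tau:\mathbb N\to L$ and apply the equivalence (c)$\Leftrightarrow$(a) of Lemma \ref{LE410} to the family $(z_l)_{l\in L}$ to conclude it is summable.

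There is essentially no hard part here — the statement is a routine consequence of the fact that the defining supremum for summability is taken over finite subsets, and finite subsets of $L$ are finite subsets of $J$. The only point requiring a modicum of care is the bookkeeping over the cases $L$ finite versus denumerable, so that one is entitled to cite Lemma \ref{LE410}, whose statement presupposes a denumerable family; but in the finite case summability is immediate from Definition \ref{DEF43}. One could also give a self-contained argument bypassing Lemma \ref{LE410} entirely: writing $z_l = \mathrm{Re}(z_l) + i\,\mathrm{Im}(z_l)$ and splitting each real and imaginary part into positive and negative parts, each of the four resulting families of non-negative numbers, indexed by $L$, has all its finite partial sums bounded by the corresponding supremum $\sum p_j,\sum q_j,\sum P_j,\sum Q_j$ over $J$ (by monotonicity of the supremum under passing to a smaller collection of finite subsets), hence each is summable by Definition \ref{DEF43}, and therefore $(z_l)_{l\in L}$ is summable by Definition \ref{DEF48}. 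I would present the proof via Lemma \ref{LE410} (c) as the cleanest route.
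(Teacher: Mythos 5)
Your proposal is correct and follows essentially the same route as the paper: both reduce to Lemma \ref{LE410}, observe that finite subsets of $L$ are finite subsets of $J$ so the absolute partial sums over $L$ inherit the bound from $J$ (you phrase this via condition (c), the paper via condition (b) — these are the same monotonicity of the defining supremum), and then convert back to summability, treating the finite case of $L$ separately as obvious. No gaps.
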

\begin{proof} The case where $L$ is finite is obvious. Let us suppose that $L$ is denumerable. Hence, $J$ is also denumerable. From Lemma \ref{LE410} (b) it follows that $\sum_{j\in J}|z_j|<\infty$. Thus, since $L\subset J$, we have $\sum_{l\in L}|z_l|\leq \sum_{j\in J}|z_j|<\infty$.
Finally, from Lemma \ref{LE410} (a) we conclude that the family $(z_l)_L$ is summable.
\end{proof}

\begin{proposition}\label{PRO413} Let us consider $\mathbb K$ fixed. Let $(a_j)_J$ and $(b_j)_J$ be summable families in $\mathbb K$, and $\lambda \in \mathbb K$. Then, the families $(a_j + b_j)_J$ and $(\lambda a_j)_J$ are summable and the following properties are satisfied.
\begin{itemize}
\item[(a)] $ \sum (a_j + b_j)  = \sum a_j  + \sum b_j$.
\item[(b)] $\sum \lambda a_j = \lambda \sum a_j$.
\item[(c)] $\big|\sum a_j\big|\leq \sum |a_j|$.
\end{itemize} 
\end{proposition}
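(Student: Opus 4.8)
The plan is to reduce everything to the real case, then to the case of non-negative families, where Definition~\ref{DEF43} (a supremum over finite subsets) makes the estimates transparent.

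First I would handle the non-negative case. Suppose $(p_j)_J$ and $(r_j)_J$ are summable families of non-negative reals. For any finite $F\subset J$ we have $\sum_{j\in F}(p_j+r_j)=\sum_{j\in F}p_j+\sum_{j\in F}r_j\le \sum p_j+\sum r_j$, so taking the supremum over finite $F$ shows $(p_j+r_j)_J$ is summable with $\sum(p_j+r_j)\le\sum p_j+\sum r_j$. For the reverse inequality, given finite $F,G\subset J$, enlarge both to $H=F\cup G$ and use monotonicity to get $\sum_{j\in F}p_j+\sum_{j\in G}r_j\le\sum_{j\in H}(p_j+r_j)\le\sum(p_j+r_j)$; taking suprema over $F$ and then over $G$ gives $\sum p_j+\sum r_j\le\sum(p_j+r_j)$. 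Likewise, for $\lambda\ge 0$, $\sum_{j\in F}\lambda p_j=\lambda\sum_{j\in F}p_j$, and the supremum of a non-negative scalar multiple is the scalar multiple of the supremum, so $(\lambda p_j)_J$ is summable with $\sum\lambda p_j=\lambda\sum p_j$.

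Next, for real families $(a_j)_J$, $(b_j)_J$ I would invoke the decomposition into positive and negative parts from Definition~\ref{DEF34} and the fact (Lemma~\ref{LE410} / Corollary~\ref{COR412} used via absolute summability, or directly the pointwise bound $|a_j+b_j|\le|a_j|+|b_j|$) that summability is equivalent to absolute summability; since $\sum_{j\in F}|a_j+b_j|\le\sum_{j\in F}|a_j|+\sum_{j\in F}|b_j|\le\sum|a_j|+\sum|b_j|$, the family $(a_j+b_j)_J$ is absolutely summable, hence summable by Lemma~\ref{LE410}. To get the additive identity, let $p_j,q_j$ and $p'_j,q'_j$ be the positive/negative parts of $a_j$ and $b_j$; then $(a_j+b_j)+(q_j+q'_j)=(p_j+p'_j)$ as families of non-negative reals, so applying the non-negative case (twice) and rearranging yields $\sum(a_j+b_j)=\sum p_j+\sum p'_j-\sum q_j-\sum q'_j=\sum a_j+\sum b_j$ after unwinding Definition~\ref{DEF48}. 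For $\sum\lambda a_j=\lambda\sum a_j$ with $\lambda$ real, treat $\lambda\ge0$ and $\lambda<0$ separately: for $\lambda\ge0$ the positive part of $\lambda a_j$ is $\lambda p_j$ and the negative part is $\lambda q_j$, while for $\lambda<0$ they swap, $\lambda a_j$ having positive part $-\lambda q_j$ and negative part $-\lambda p_j$; in each case the non-negative scalar result finishes it. Property (c) in the real case is immediate from $-|a_j|\le a_j\le|a_j|$ summed in the sense of Definition~\ref{DEF48} together with monotonicity of the unordered sum on non-negative families.

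Finally I would lift to $\mathbb K=\mathbb C$ by splitting into real and imaginary parts via Definition~\ref{DEF48}: $\mathrm{Re}(a_j+b_j)=\mathrm{Re}(a_j)+\mathrm{Re}(b_j)$ and similarly for $\mathrm{Im}$, so the real case gives summability and additivity componentwise, and $\sum(a_j+b_j)=\sum\mathrm{Re}(a_j+b_j)+i\sum\mathrm{Im}(a_j+b_j)=\sum a_j+\sum b_j$. For $\lambda=\alpha+i\beta\in\mathbb C$, write $\lambda a_j=(\alpha\,\mathrm{Re}(a_j)-\beta\,\mathrm{Im}(a_j))+i(\beta\,\mathrm{Re}(a_j)+\alpha\,\mathrm{Im}(a_j))$ and apply the real additivity and real scalar-multiplication results to each part; collecting terms gives $\sum\lambda a_j=\lambda\sum a_j$. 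For (c) over $\mathbb C$, reduce to the real inequality by the standard trick: choose a unit $u\in\mathbb C$ with $u\sum a_j=|\sum a_j|$, so by (b) $|\sum a_j|=\sum(u a_j)=\mathrm{Re}\sum(ua_j)=\sum\mathrm{Re}(ua_j)\le\sum|ua_j|=\sum|a_j|$, using the real case of (c) and $|ua_j|=|a_j|$. The main obstacle is purely bookkeeping rather than conceptual: one must be careful that ``summable'' for signed/complex families is defined through parts, so every manipulation of an unordered sum has to be pushed down to non-negative families before the sup-over-finite-subsets estimates can be applied, and the scalar case genuinely needs the sign split because the positive and negative parts interchange under multiplication by a negative number.
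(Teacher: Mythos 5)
Your proof is correct, but it takes a genuinely different route from the paper's. The paper disposes of all three items in a few lines by choosing a bijection $\sigma:\mathbb N\to J$ and invoking Lemma~\ref{LE410}: summability is equivalent to absolute convergence of $\sum_{n=0}^{+\infty}a_{\sigma(n)}$, and the unordered sum equals the series sum, so (a), (b), (c) become the standard term-by-term addition, scalar multiplication, and triangle inequality for absolutely convergent series. You instead argue intrinsically from Definition~\ref{DEF43}: you prove additivity and non-negative homogeneity for non-negative families by the two-sided estimate with $F$, $G$, and $H=F\cup G$, then lift through the positive/negative-part decomposition (via the identity $(a_j+b_j)+(q_j+q'_j)=p_j+p'_j$ and cancellation of finite sums) and the real/imaginary-part decomposition of Definition~\ref{DEF48}, finishing (c) over $\mathbb C$ with the rotation trick $u\sum a_j=\lvert\sum a_j\rvert$. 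The trade-off: the paper's reduction is much shorter but leans on the prior theory of series and on $J$ being countable; your argument is longer and requires the bookkeeping you acknowledge (every manipulation must be pushed down to non-negative families, and the sign split is genuinely needed for negative scalars), but it never leaves the sup-over-finite-subsets definition and would carry over verbatim to uncountable index sets -- precisely the extension the introduction advertises. The only places you implicitly use an auxiliary fact are the monotonicity of the unordered sum on non-negative families (immediate from the definition) and, in the complex case of (c), the step $\sum\mathrm{Re}(ua_j)\le\sum\lvert\mathrm{Re}(ua_j)\rvert\le\sum\lvert a_j\rvert$, both of which are easily supplied; neither is a gap.
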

\begin{proof} The case where $J$ is finite is obvious. Hence, supposing that $J$ is countably infinite, let us consider an arbitrary bijection $\sigma: \mathbb N\to J$.

\begin{itemize}
\item[(a)] From Lemma \ref{LE410} it follows that the series $\sum_{n=0}^{+\infty}|a_{\sigma(n)}|$ and $\sum_{n=0}^{+\infty}|b_{\sigma(n)}|$ are both convergent. Hence, by the triangle inequality we conclude that the series $\sum_{n=0}^{+\infty} |a_{\sigma(n)}+b_{\sigma(n)}|$ also converges. Therefore, employing Lemma \ref{LE410} we deduce that the family $(a_j+b_j)_J$ is summable and satisfies 
\[ \sum_J (a_j + b_j) = \sum\limits_{n=0}^{+\infty} [a_{\sigma(n)}\,+\,b_{\sigma(n)}] =  \sum\limits_{n=0}^{+\infty}a_{\sigma(n)}\,+\, \sum\limits_{n=0}^{+\infty}b_{\sigma(n)} = \sum_J a_j \, +\, \sum_J b_j .\]
\item[(b)] It is trivial.
\item[(c)] We already showed that $\sum_{n=0}^{+\infty} a_{\sigma(n)}$ converges absolutely. Therefore, from a well-known property of series we have $\big|\sum_{n=0}^{+\infty} a_{\sigma(n)}\big|\leq \sum_{n=0}^{+\infty} |a_{\sigma(n)}|$. Hence, from Lemma \ref{LE410} we conclude that $\big|\sum a_j|\leq \sum |a_j|$.
\end{itemize}
\end{proof}

\section{Unordered Countable Sums and Associativity.}

In this section we show that given the sum of an arbitrary countable family of non-negative numbers $\sum_Jp_j$, we can freely associate them. It is also shown that we can freely dissociate each $p_j$, where $j\in J$, as a countable family of non-negative numbers. Furthermore, given a summable countable family of complex numbers, we prove that we can freely associate them. These operations keep the value of the respective sum unchanged.

To simplify the presentation, we enunciate the results and give the respective proofs for the case where the family is a sequence (i.e., the family is indexed by $\mathbb N$). These results extend easily to families indexed by an arbitrary denumerable set $J$. To see this, one just needs to employ a bijection $\sigma:\mathbb N\to J$.

Before continuing, we introduce a notation. Let $J$ be a nonempty set.
 
\begin{itemize}
\item[$\circ$] A \textit{partition} of $J$ is a collection of nonempty subsets $\{J_{\,l} : J_l\subset J \ \textrm{and}\ l\in L\}$, where $L$ is an index set, such that the sets $J_l$, with $l\in L$, are \textit{pairwise disjoint} (i.e., $J_{\,l}\cap J_{\,l'}=\emptyset$, for any $l\in L$ and $l'\in L$ such that $l\neq l'$) and 
$$J= \bigcup_{l \in L}J_{\,l}.$$
\end{itemize}
\begin{theorem}\label{TEO51} Let $(p_n)$ be a sequence of non-negative numbers. Let $\mathbb N = \bigcup_{l \in L}J_{\,l}$ be an arbitrary partition of $\mathbb N$. Then we have 
\[ \sum p_n = \sum_{l \,\in \,L}\,\sum_{n\,\in J_{\,l}}p_n,\]
where we write $\sum_{l \,\in \,L}\sum_{n\,\in J_{\,l}}p_n =+\infty$ when $\sum_{n\,\in J_{\,l}}p_n=+\infty$ for some $l\in L$.
\end{theorem}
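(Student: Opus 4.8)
The plan is to prove the identity directly as an equality of elements of $[0,+\infty]$, by establishing the two opposite inequalities and using throughout the supremum characterization of sums of non-negative terms: $\sum p_n=\sup\{\sum_{n\in F}p_n:F\subset\mathbb N\text{ finite}\}$ (Theorem \ref{TEO41} together with the last bullet of Remark \ref{REM44}), and likewise $\sum_{n\in J_l}p_n$ and $\sum_{l\in L}\sum_{n\in J_l}p_n$ are the corresponding suprema over finite subsets of $J_l$ and of $L$ (Definition \ref{DEF43}). Two preliminary observations are needed first. One: $L$ is countable, since choosing $n_l\in J_l$ for each $l$ (each block is nonempty) gives an injection $l\mapsto n_l$ of $L$ into $\mathbb N$ because the blocks are pairwise disjoint; hence Definition \ref{DEF43} legitimately applies to the family $\big(\sum_{n\in J_l}p_n\big)_{l\in L}$. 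Two: if $\sum_{n\in J_l}p_n=+\infty$ for some $l$, then $\sum p_n\ge\sum_{n\in J_l}p_n=+\infty$ (as $J_l\subset\mathbb N$ and the $p_n$ are non-negative), so both sides of the asserted identity are $+\infty$ (the left-hand one by the convention stated in the theorem); thus from now on we may assume every $\sum_{n\in J_l}p_n$ is finite and set $\rho=\sum p_n$.

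For the inequality $\rho\le\sum_{l\in L}\sum_{n\in J_l}p_n$, I would fix a finite set $F\subset\mathbb N$. Because $F$ is finite and $\{J_l\}_{l\in L}$ is a partition of $\mathbb N$, only finitely many blocks meet $F$; letting $L_F=\{l\in L:F\cap J_l\neq\emptyset\}$, the set $F$ is the disjoint union of the sets $F\cap J_l$, $l\in L_F$, so
\[\sum_{n\in F}p_n=\sum_{l\in L_F}\sum_{n\in F\cap J_l}p_n\ \le\ \sum_{l\in L_F}\sum_{n\in J_l}p_n\ \le\ \sum_{l\in L}\sum_{n\in J_l}p_n,\]
the last step holding because $L_F$ is a finite subset of $L$ (Definition \ref{DEF43}). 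Taking the supremum over all finite $F\subset\mathbb N$ and applying Theorem \ref{TEO41} gives $\rho\le\sum_{l\in L}\sum_{n\in J_l}p_n$.

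For the reverse inequality, I would fix a finite set $F\subset L$ and $\varepsilon>0$, and for each $l\in F$ use Definition \ref{DEF43} to pick a finite set $F_l\subset J_l$ with $\sum_{n\in F_l}p_n>\sum_{n\in J_l}p_n-\varepsilon$. The sets $F_l$, $l\in F$, are pairwise disjoint (since the $J_l$ are), so $\bigcup_{l\in F}F_l$ is a finite subset of $\mathbb N$ and
\[\rho\ \ge\ \sum_{n\in\bigcup_{l\in F}F_l}p_n\ =\ \sum_{l\in F}\sum_{n\in F_l}p_n\ >\ \sum_{l\in F}\sum_{n\in J_l}p_n-|F|\varepsilon.\]
Letting $\varepsilon\to 0^+$ gives $\rho\ge\sum_{l\in F}\sum_{n\in J_l}p_n$, and taking the supremum over finite $F\subset L$ yields $\sum_{l\in L}\sum_{n\in J_l}p_n\le\rho$. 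Combining the two inequalities proves the theorem. The whole argument is bookkeeping with finite subsets of a partition; the only place demanding a little care is the last display, where the supremum defining the inner sums must be pulled out past a finite sum via the $\varepsilon$-trick — that is the step I would single out as the (minor) technical point, and I do not anticipate any deeper obstacle.
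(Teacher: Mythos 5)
Your proof is correct, and its overall architecture coincides with the paper's: both sides are compared through the supremum-over-finite-subsets characterization, the case of an infinite inner sum is dispatched first via $\sum p_n\ge\sum_{n\in J_{l}}p_n$, and the inequality $\sum p_n\le\sum_{l\in L}\sum_{n\in J_l}p_n$ is obtained exactly as you obtain it, by covering a finite $F\subset\mathbb N$ with the finitely many blocks it meets. The one step where you genuinely diverge is the reverse inequality. The paper starts from $\sum_{F_{l_1}}p_n+\cdots+\sum_{F_{l_k}}p_n\le\sum p_n$ for finite $F_{l_r}\subset J_{l_r}$ and pulls the suprema inside one at a time---fix $F_{l_2},\ldots,F_{l_k}$, take the supremum over $F_{l_1}$, then over $F_{l_2}$, and so on by induction---whereas you approximate all the inner suprema simultaneously to within $\varepsilon$ and let $\varepsilon\to0^{+}$, absorbing a harmless $|F|\varepsilon$ error. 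The two devices are interchangeable here (both rest on the finiteness of the inner sums, which you have already secured); your $\varepsilon$-version is shorter and avoids the induction, while the paper's stays entirely supremum-theoretic and never introduces an $\varepsilon$. Your preliminary observation that $L$ is countable (via an injection $l\mapsto n_l\in J_l$) addresses a point the paper passes over in silence and is needed if Definition \ref{DEF43} is to apply literally to the family $\bigl(\sum_{n\in J_l}p_n\bigr)_{l\in L}$, so it is a welcome addition. One trivial slip: in your infinite case it is the right-hand side of the asserted identity (the iterated sum) that equals $+\infty$ by the stated convention, and the left-hand side that equals $+\infty$ by the comparison $\sum p_n\ge\sum_{n\in J_l}p_n$; you have the labels swapped, but the mathematics is unaffected.
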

\begin{proof} Let us analyze two cases.
\begin{itemize}
\item[(1)] Let us suppose that $\sum_{n\,\in J_{\,l'}}p_n=+\infty$ for some $l'\in L$. From Definition \ref{DEF43} we obtain the inequality $\sum p_n\geq \sum_{n\in J_{\,l'}}p_n$. Thus, we have $\sum p_n=+\infty$.

\item[(2)] By supposing that $\sum_{n\,\in J_{\,l}}p_n<\infty$, for all $ l\in L$, let us show two inequalities. Given $F\subset \mathbb N$, with $F$ finite, by hypothesis there exists a finite subset of distinct indices $ \{l_1, \ldots,  l_k\}\subset L$ such that $F\subset J_{l_1}\cup \ldots \cup J_{l_k}$. It then follows 
\[\sum\limits_{n\in F}p_n \ \leq \ \sum\limits_{J_{l_1}}p_n + \cdots +\,\sum\limits_{J_{l_k}}p_n\ \leq \ \sum\limits_{l \in \,L}\sum\limits_{n\,\in J_{\,l}}p_n.\]
As a result, by the definition of $\sum p_n$ we obtain the first inequality 
\[\sum p_n  \ \leq \ \sum\limits_{l \in \,L}\sum\limits_{n\,\in J_{\,l}}p_n .\]
In order to obtain the reverse inequality, we notice that given a finite set of distinct indices $\{l_1, \ldots, l_k\}\subset L$ then the sets $J_{l_1},  \ldots, J_{l_k}$ are pairwise disjoint. Hence, given for each $r\in \{1, \ldots, k\}$ an arbitrary finite set $F_{l_r} \subset J_{l_r}$, we deduce that the sets $F_{l_1}, \ldots, F_{l_k}$ are also pairwise disjoint. Thus,
\[\sum\limits_{F_{l_1}} p_n+ \cdots + \sum\limits_{ F_{l_k}} p_n\ \leq\ \sum p_n .\]
Considering the inequality right above, fixing the sets $F_{l_2}, \ldots, F_{l_k}$ in it, and then taking in it the supremum over the family of all finite sets $F_{l_1}$ contained in $J_{l_1}$ we arrive at the inequality $\sum_{J_{l_1}} p_n + \sum_{F_{l_2}}p_n + \cdots +\sum_{F_{l_k}}p_n\leq \sum p_n$. Now, considering this last inequality, fixing the sets $F_{l_3}, \ldots, F_{l_k}$ in it, and then taking in it the supremum over the family of all finite sets $F_{l_2}$ contained in $J_{\,l_2}$ we arrive at $\sum_{J_{l_1}} p_n + \sum_{J_{l_2}}p_n +\sum_{F_{l_3}}p_n+ \cdots +\sum_{F_{l_k}}p_n\leq \sum p_n$. Thus, by induction we find the inequality
\[\sum\limits_{J_{l_1}} p_n + \sum\limits_{J_{l_2}}p_n + \cdots +\sum\limits_{J_{l_k}}p_n\ \leq \ \sum p_n .\]
Finally, since $\{l_1, l_2, \ldots, l_k\}$ is an arbitrary finite subset of $L$ we obtain 
\[\sum_{l\in L}\sum_{n\in J_{\,l}}p_n \, \leq \, \sum p_n.\]
\end{itemize}
\end{proof}

\begin{corollary} \label{COR52}({\sf Associative Law}) 
Let $(a_n)_{ \mathbb N}$ be a summable complex sequence and $ \bigcup_{l \in L}J_{\,l}$ be a partition of $\mathbb N$. Then the family $(a_n)_{n\in J_l}$ is summable, for all $l \in L$, and
\[ \sum a_n = \sum_{L}\sum_{n\in J_{\,l}}a_n.\]
\end{corollary}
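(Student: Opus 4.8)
The plan is to reduce the complex case to the non-negative case already handled by Theorem \ref{TEO51}, using the decomposition of a complex number into four non-negative pieces that was set up in the proof of Lemma \ref{LE410}. Write $a_n = \textrm{Re}(a_n) + i\,\textrm{Im}(a_n)$, and for each $n$ let $p_n, q_n$ be the positive and negative parts of $\textrm{Re}(a_n)$, and $P_n, Q_n$ the positive and negative parts of $\textrm{Im}(a_n)$. Since $(a_n)_{\mathbb N}$ is summable, Definition \ref{DEF48} gives that all four sequences $(p_n), (q_n), (P_n), (Q_n)$ are summable sequences of non-negative numbers, and $\sum a_n = \sum p_n - \sum q_n + i(\sum P_n - \sum Q_n)$.

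First I would establish summability of the restricted families. Fix $l \in L$. Since $J_l \subset \mathbb N$, Corollary \ref{COR412} immediately gives that $(a_n)_{n \in J_l}$ is summable; alternatively one notes $\sum_{n \in J_l} p_n \leq \sum p_n < \infty$ and likewise for $q, P, Q$. This also shows $(p_n)_{n \in J_l}$, $(q_n)_{n \in J_l}$, $(P_n)_{n \in J_l}$, $(Q_n)_{n \in J_l}$ are each summable, and by Definition \ref{DEF48},
\[
\sum_{n \in J_l} a_n = \sum_{n \in J_l} p_n - \sum_{n \in J_l} q_n + i\Big(\sum_{n \in J_l} P_n - \sum_{n \in J_l} Q_n\Big).
\]

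Next I would apply Theorem \ref{TEO51} four times, once to each of the non-negative sequences $(p_n), (q_n), (P_n), (Q_n)$ with the given partition $\mathbb N = \bigcup_{l \in L} J_l$, obtaining $\sum p_n = \sum_{l \in L} \sum_{n \in J_l} p_n$ and the analogous identities for $q, P, Q$ (all sums finite, so no $+\infty$ issues arise). It remains to assemble these into the desired identity. Here I would invoke the additivity of unordered sums over the index set $L$: the family $\big(\sum_{n \in J_l} a_n\big)_{l \in L}$ is summable because each of its four non-negative component-families over $L$ is summable (their $L$-sums equal the finite numbers $\sum p_n$, etc.), and then Proposition \ref{PRO413}(a), (b) lets me split $\sum_{l \in L} \sum_{n \in J_l} a_n$ into the real and imaginary, positive and negative parts, giving $\sum_L \sum_{n \in J_l} a_n = \sum_{l} \sum_{n \in J_l} p_n - \sum_l \sum_{n \in J_l} q_n + i(\cdots)$. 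Combining with the four scalar identities yields $\sum_L \sum_{n \in J_l} a_n = \sum p_n - \sum q_n + i(\sum P_n - \sum Q_n) = \sum a_n$.

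The main obstacle is bookkeeping rather than mathematics: one must be careful that the families being summed over $L$ (namely $l \mapsto \sum_{n \in J_l} a_n$ and its four non-negative parts) are genuinely summable before applying Proposition \ref{PRO413}, and that the positive/negative part of $\sum_{n \in J_l} \textrm{Re}(a_n)$ is not literally $\sum_{n \in J_l} p_n$ — rather, Definition \ref{DEF48} is applied to the family $l \mapsto \textrm{Re}(\sum_{n \in J_l} a_n) = \sum_{n \in J_l} p_n - \sum_{n \in J_l} q_n$, whose own positive and negative parts are dominated by $\sum_{n \in J_l} p_n$ and $\sum_{n \in J_l} q_n$ respectively, hence summable over $L$. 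Once this is observed, everything reduces to the non-negative Theorem \ref{TEO51} plus the linearity in Proposition \ref{PRO413}, and the proof is a few lines. As the excerpt notes, the reduction from a general denumerable $J$ to $\mathbb N$ is handled by composing with a bijection $\sigma : \mathbb N \to J$.
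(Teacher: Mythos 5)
Your proposal is correct and follows essentially the same route as the paper: reduce to the non-negative case of Theorem \ref{TEO51} via the positive/negative parts of the real and imaginary parts, use Corollary \ref{COR412} for summability of the restricted families, and reassemble with the linearity in Proposition \ref{PRO413}. The paper merely organizes this as two nested cases (real, then complex) rather than handling all four non-negative components at once, and your remark about the positive/negative parts of $\sum_{n\in J_l}\mathrm{Re}(a_n)$ not literally being $\sum_{n\in J_l}p_n$ is a careful filling-in of a detail the paper leaves implicit.
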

\begin{proof} From Corollary \ref{COR412} we deduce that $(a_n)_{n\in J_l}$ is summable, for all $l$ in $L$. Let us analyze two cases.
\begin{itemize}
\item[(1)]
If $(a_n)\subset \mathbb R$, then the claimed identity follows from the decomposition $\sum\limits a_n = \sum\limits p_n - \sum\limits q_n$ (see  Definition \ref{DEF48}), Theorem \ref{TEO51}, and Proposition \ref{PRO413}. 

\item[(2)] If $(a_n)\subset \mathbb C$, then the claimed identity follows from the decomposition $\sum a_n = \sum \textrm{Re}(a_n) + i\sum\textrm{Im}(a_n)$ (see  Definition \ref{DEF48}), case (1), and Proposition \ref{PRO413}.
\end{itemize}
\end{proof}

\

\section{Sum of a Double Sequence and The Cauchy Product}
Given a complex double sequence  $(a_{(n,m)})_{\,\mathbb N\times \mathbb N}$, we denote its terms by $a_{nm}$ or $a_{(n,m)}$, where $(n,m)\in \mathbb N\times \mathbb N$, and its unordered sum (if it exists) by 
\[\sum\limits_{\mathbb N\times \mathbb N}a_{nm}\,,\ \sum\limits_{n,m}a_{nm}\ \textrm{or}\ \sum a_{nm}.\]
It is also usual to denote the double sequence by the infinite matrix
\begin{displaymath}
\left(\begin{array}{cccc}
a_{11} & a_{12}& a_{13}& \ldots\\
a_{21} & a_{22} & a_{23} &\ldots\\
a_{31}  & a_{32} & a_{33}&\ldots\\
\ldots& \ldots& \ldots & \ldots\\
\ldots & \ldots & \ldots & \ldots \\
\end{array}\right) .
\end{displaymath}

\newpage

\begin{proposition}\label{PRO61} Let us suppose that $\sum |\,a_{nm}|<\infty $. Then, $(a_{nm})$ is summable and the following are true.
\begin{itemize}
\item[(a)] If $\bigcup_{l\in L} J_l$ is a partition of $\mathbb N\times \mathbb N$, then
\[ \sum_{l\in L}\sum_{(n,m)\in J_l}a_{nm}= \sum a_{nm} .\]
\item[(b)] Given any bijection $\sigma : \mathbb N \to \mathbb N\times \mathbb N$, we have 
$\sum\limits_{\,k\,=0}^{+\infty} a_{\sigma(k)}\,= \sum a_{nm}$.
\end{itemize}
\end{proposition}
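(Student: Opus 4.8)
The plan is to reduce both parts to results already established for countable families, using the fact that $\mathbb{N} \times \mathbb{N}$ is countable. First I would observe that the absolute summability hypothesis $\sum |a_{nm}| < \infty$ together with Lemma~\ref{LE410} immediately gives that $(a_{nm})_{\mathbb{N}\times\mathbb{N}}$ is summable (indeed, $\mathbb{N}\times\mathbb{N}$ is countable by property~(4) of Section~2, so Lemma~\ref{LE410} applies verbatim with any bijection $\sigma:\mathbb{N}\to\mathbb{N}\times\mathbb{N}$).

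For part~(a), once $(a_{nm})$ is known to be summable, the claimed identity is exactly the Associative Law. More precisely, I would fix a bijection $\sigma:\mathbb{N}\to\mathbb{N}\times\mathbb{N}$ and transport the partition $\{J_l : l\in L\}$ of $\mathbb{N}\times\mathbb{N}$ to the partition $\{\sigma^{-1}(J_l) : l\in L\}$ of $\mathbb{N}$; applying Corollary~\ref{COR52} to the summable sequence $(a_{\sigma(k)})_{k\in\mathbb{N}}$ and this partition, and then using the final identity of Lemma~\ref{LE410} (which says $\sum_{(n,m)} a_{nm} = \sum_{k=0}^{+\infty} a_{\sigma(k)}$ and likewise $\sum_{(n,m)\in J_l} a_{nm} = \sum_{k\in\sigma^{-1}(J_l)} a_{\sigma(k)}$, the latter by Corollary~\ref{COR412} giving summability of the restriction and Lemma~\ref{LE410} again on $J_l$), yields the result. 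Alternatively, one may invoke the remark preceding Theorem~\ref{TEO51} that all the associativity results extend to an arbitrary denumerable index set via a bijection, so Corollary~\ref{COR52} applies directly with $J = \mathbb{N}\times\mathbb{N}$.

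For part~(b), given any bijection $\sigma:\mathbb{N}\to\mathbb{N}\times\mathbb{N}$, the identity $\sum_{k=0}^{+\infty} a_{\sigma(k)} = \sum a_{nm}$ is precisely the ``Moreover'' clause of Lemma~\ref{LE410} applied to the denumerable family $(a_{nm})_{\mathbb{N}\times\mathbb{N}}$, whose summability was established at the outset from $\sum|a_{nm}|<\infty$. So part~(b) is essentially immediate.

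I do not expect any serious obstacle here: the proposition is a direct corollary of the machinery of Sections~4 and~5, and the only thing to be careful about is bookkeeping with the bijection $\sigma$ and checking that the restricted families $(a_{nm})_{(n,m)\in J_l}$ are summable (which is Corollary~\ref{COR412}). The mild subtlety worth a sentence in the write-up is that a partition of $\mathbb{N}\times\mathbb{N}$ may have finite blocks $J_l$, or $L$ may be finite, but in each such case the relevant sub-sum is just a finite sum and the arguments of Corollary~\ref{COR52} and Theorem~\ref{TEO51} already cover these degenerate situations.
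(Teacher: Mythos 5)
Your proposal is correct and follows essentially the same route as the paper, which likewise derives the summability of $(a_{nm})$ and part (b) from Lemma~\ref{LE410} and part (a) from Corollary~\ref{COR52} (transported to the index set $\mathbb{N}\times\mathbb{N}$ via a bijection, as licensed by the remark at the start of Section~5). Your write-up merely makes the bijection bookkeeping explicit where the paper leaves it implicit.
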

\begin{proof} The summability of $(a_{nm})$ follows from Lemma \ref{LE410}.
\begin{itemize}
\item[(a)] It follows from Corollary \ref{COR52}.
\item[(b)] It follows from Lemma \ref{LE410}.
\end{itemize}
\end{proof}

\begin{definition} \label{DEF62} Given $\sum_{n=0}^{+\infty} a_n$ and $\sum_{m=0}^{+\infty} b_m$, their Cauchy product is the series
\[ (a_0b_0) + (a_0b_1+ a_1b_0) + (a_0b_2 + a_1b_1 + a_2b_0)+\cdots  = \sum_{p=0}^{+\infty}\, c_p,\ \ \textrm{where} \ c_p = \sum_{n + m \,= p}a_nb_m  .\]
\end{definition}

\begin{corollary}\label{COR63} Let us consider $\sum_{n=0}^{+\infty} a_n =a$ and $\sum_{m=0}^{+\infty} b_m =b$ two absolutely convergent series. Then, we have $\sum|\,a_nb_m|< \infty$ and the following properties:
\begin{itemize}
\item[(a)] $\sum\limits a_nb_m = ab$.
\item[(b)] Their Cauchy product is an absolutely convergent series and 
\[ \sum_{p=0}^{+\infty}\Big(\sum_{n+m=p}a_n\, b_m\Big)  =  ab. \]
\end{itemize}
\end{corollary}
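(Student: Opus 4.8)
The plan is to treat the double-indexed family $(a_n b_m)_{(n,m)\in\mathbb N\times\mathbb N}$ as a single summable family and then apply the associativity and rearrangement results already established. First I would verify absolute summability: since $\sum_{n}|a_n|$ and $\sum_m|b_m|$ converge, for any finite set $F\subset\mathbb N\times\mathbb N$ one has $\sum_{(n,m)\in F}|a_nb_m|\le\bigl(\sum_{n=0}^{+\infty}|a_n|\bigr)\bigl(\sum_{m=0}^{+\infty}|b_m|\bigr)$, because $F$ is contained in some product $\{0,\dots,N\}\times\{0,\dots,N\}$ and on such a product the sum factors. Hence $\sum|a_nb_m|<\infty$, and by Lemma \ref{LE410} the family $(a_nb_m)$ is summable.

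For part (a), I would use the partition of $\mathbb N\times\mathbb N$ into rows $J_n=\{n\}\times\mathbb N$, $n\in\mathbb N$. By the Associative Law (Corollary \ref{COR52}), $\sum a_nb_m=\sum_{n}\sum_{m\in\mathbb N}a_nb_m$. For each fixed $n$, the inner family $(a_nb_m)_{m\in\mathbb N}$ is summable (Corollary \ref{COR412}) and, by Lemma \ref{LE410} together with the scalar-multiplication property of Proposition \ref{PRO413}(b), its unordered sum equals $\sum_{m=0}^{+\infty}a_nb_m=a_n\sum_{m=0}^{+\infty}b_m=a_nb$. So the double sum becomes $\sum_{n}(a_nb)=b\sum_{n}a_n=ba=ab$, again invoking Proposition \ref{PRO413}(b) and Lemma \ref{LE410} to pass between the unordered sum and the ordinary series $\sum_{n=0}^{+\infty}a_n$.

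For part (b), I would use instead the \emph{diagonal} partition of $\mathbb N\times\mathbb N$ into the finite sets $D_p=\{(n,m):n+m=p\}$, $p\in\mathbb N$, which is indeed a partition. By Corollary \ref{COR52} once more, $\sum a_nb_m=\sum_{p}\sum_{(n,m)\in D_p}a_nb_m=\sum_{p}c_p$, where each $c_p=\sum_{n+m=p}a_nb_m$ is a finite sum. Since the left-hand side equals $ab$ by part (a), and since the unordered sum $\sum_p c_p$ equals the ordinary series $\sum_{p=0}^{+\infty}c_p$ by Corollary \ref{COR45} (applied after checking summability), it remains only to confirm that $\sum_{p=0}^{+\infty}c_p$ is absolutely convergent; but $\sum_p|c_p|\le\sum_p\sum_{(n,m)\in D_p}|a_nb_m|=\sum|a_nb_m|<\infty$ by the same diagonal partition applied to the non-negative family. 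This gives both assertions of (b).

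I do not expect a serious obstacle here; the work is entirely bookkeeping. The one point requiring a little care is the factorization estimate $\sum_{(n,m)\in F}|a_nb_m|\le\bigl(\sum|a_n|\bigr)\bigl(\sum|b_m|\bigr)$ for finite $F$, which is what licenses every later step — one must note that a finite $F$ sits inside a rectangle and that on a rectangle the sum of $|a_n||b_m|$ genuinely factors as a product of two finite sums, each bounded by the corresponding infinite series. Everything else is a direct appeal to Lemma \ref{LE410}, Corollary \ref{COR412}, Corollary \ref{COR52}, Corollary \ref{COR45}, and Proposition \ref{PRO413}.
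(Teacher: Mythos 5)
Your proof is correct and follows essentially the same route as the paper: establish $\sum|a_nb_m|<\infty$, obtain (a) from the row partition of $\mathbb N\times\mathbb N$ via the associative law, and obtain (b) from the diagonal partition $\{(n,m):n+m=p\}$. One tiny citation slip: to identify the unordered sum $\sum_p c_p$ with the series $\sum_{p=0}^{+\infty}c_p$ for the \emph{complex} numbers $c_p$ you should invoke Lemma \ref{LE410} (with $\sigma$ the identity), not Corollary \ref{COR45}, which is stated only for non-negative terms --- but since you have already verified $\sum_p|c_p|<\infty$, the needed hypothesis is in hand and the argument stands.
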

\begin{proof} From Theorem ~\ref{TEO51} we deduce that
$\sum|a_nb_m| = \big(\sum |a_n|\big)\,\big(\sum |b_m|\big)<\infty\,$.

\begin{itemize}
\item[(a)] Employing Proposition ~\ref{PRO61} (a) and Lemma 4.10, in this order, we obtain 
\[\sum a_nb_m= \sum_{n\in \mathbb N}\sum_{m\in \mathbb N}a_nb_m= \sum\limits_{n=0}^{+\infty}a_n\Big(\sum\limits_{m=0}^{+\infty}b_m\Big)= ab.\]
\item[(b)] Let us pick the partition $(J_p)_{ p \in \mathbb N}$ of $\mathbb N \times \mathbb N$, where $J_p = \{(n,m) \in \mathbb N\times \mathbb N: n + m = p\}$. By item (a), Theorem ~\ref{PRO61} (a), and Lemma ~\ref{LE410} we have 
\[ab = \sum a_n b_m  = \sum_{p\in \mathbb N}c_p =\sum_{p=0}^{+\infty}c_p,  \ \textrm{where}\ c_p=\sum_{n+m=p}a_nb_m \ \textrm{and} \ \sum\limits_{p=0}^{+\infty} |c_p|<\infty. \]
\end{itemize}
\end{proof}

\section{Power Series - Algebraic Properties}

\begin{definition}\label{DEF71}  A \textit{power series} with coefficients $(a_n)_{\mathbb N}\subset \mathbb C$ and centered at $z_0\in \mathbb C$ is a function of the form $f(z)=\sum_{n=0}^{+\infty}a_n(z-z_0)^n$, where $z$ is a complex variable. 
\end{definition}

We say that the power series $\sum_{n=0}^{+\infty}a_n(z-z_0)^n$  converges (diverges) at a point $z=w\in \mathbb C$ if the numerical series $\sum_{n=0}^{+\infty}a_n(w-z_0)^n$ converges (diverges). Through the translation $w=z-z_0$ we go from the power series $\sum_{n=0}^{+\infty}a_n(z-z_0)^n$ to the power series $\sum_{n=0}^{+\infty}a_nw^n$. Hence, we simplify this presentation by supposing the power series centered at $z_0=0$.

\begin{theorem}\label{TEO72} Let us consider a power series $\sum_{n=0}^{+\infty}a_n z^n$ and
\[\rho=\sup\Big\{ r\geq 0: \sum_{n=0}^{+\infty} a_nz^n \ \textrm{is convergent for some }\ |z|=r\Big\}\,, \ \textrm{with} \ \rho \in [0,+\infty].\]
The following are true.
\begin{itemize}
\item[(a)] If $|z|<\rho$, then the power series converges absolutely at $z$.
\item[(b)] If $|z|>\rho$, then the power series diverges at $z$.
\end{itemize}
\end{theorem}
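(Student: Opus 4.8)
The plan is to prove the classical Cauchy--Hadamard dichotomy using only the comparison with a geometric series, which is the natural tool given everything developed so far. First I would establish the key auxiliary fact (an Abel-type lemma): if $\sum_{n=0}^{+\infty} a_n w^n$ converges for some $w \in \mathbb{C}$, then the terms $a_n w^n$ are bounded, say $|a_n w^n| \le M$ for all $n$ (since a convergent series has terms tending to $0$, hence bounded); consequently, for any $z$ with $|z| < |w|$ we have
\[
|a_n z^n| = |a_n w^n|\left(\frac{|z|}{|w|}\right)^n \le M\, t^n, \qquad t = \frac{|z|}{|w|} < 1,
\]
and since $\sum_{n=0}^{+\infty} M t^n$ is a convergent geometric series, Corollary~\ref{COR45} (comparison for non-negative series, via summability) shows $\sum_{n=0}^{+\infty} |a_n z^n| < \infty$, i.e.\ the power series converges absolutely at $z$.

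For part (a), suppose $|z| < \rho$. By definition of $\rho$ as a supremum, there exists $r$ with $|z| < r \le \rho$ such that $\sum_{n=0}^{+\infty} a_n \zeta^n$ converges for some $\zeta$ with $|\zeta| = r > |z|$. Applying the auxiliary lemma with $w = \zeta$ gives absolute convergence at $z$. (One should note the degenerate cases: if $\rho = 0$ there is nothing to prove for (a); if $\rho = +\infty$ the argument still applies since one can always find such an $r$.) For part (b), suppose $|z| > \rho$ and, for contradiction, that $\sum_{n=0}^{+\infty} a_n z^n$ converges. Then $|z|$ belongs to the set whose supremum is $\rho$ (with the point $z$ itself witnessing convergence on the circle $|\zeta| = |z|$), so $|z| \le \rho$, contradicting $|z| > \rho$. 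Hence the series diverges at $z$.

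The only subtlety---really the one place care is needed---is the use of the supremum in part (a): from $|z| < \rho$ one cannot immediately assert that the series converges at some point of modulus exactly between $|z|$ and $\rho$; rather, one uses that $\rho$ is the supremum of a set $S \subseteq [0, +\infty]$ of radii, so for $|z| < \rho$ there must exist an element $r \in S$ with $|z| < r$ (otherwise $|z|$ would be an upper bound for $S$ smaller than $\rho$), and $r \in S$ means exactly that there is some $\zeta$ with $|\zeta| = r$ at which the series converges. Everything else is the geometric-series comparison already licensed by Corollary~\ref{COR45}, so no $\varepsilon$--$\delta$ arguments and no new summability machinery are required. I expect the write-up to be short; the main obstacle is simply phrasing the supremum argument cleanly rather than any real mathematical difficulty.
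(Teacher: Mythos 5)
Your proposal is correct and follows essentially the same route as the paper: boundedness of the terms $a_nw^n$ at a point $w$ of larger modulus obtained from the supremum, followed by comparison with a geometric series for (a), and the immediate supremum contradiction for (b). Your extra care in extracting $r\in S$ with $|z|<r$ and a witness $\zeta$ with $|\zeta|=r$ is a slightly more scrupulous phrasing of the step the paper dispatches with ``by the definition of $\sup$ there exists $w$ with $|z|<|w|<\rho$,'' but the mathematics is identical.
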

\begin{proof} Since $\sum_{n=0}^{+\infty}|a_n|r^n$ converges at $r=0$, we conclude that $\rho$ is well defined.
\begin{itemize}
\item[(a)] By the definition of $\sup$ there exists $w\in \mathbb C$, with $|z|<|w|<\rho$, such that $\sum_{n=0}^{+\infty}a_nw^n<\infty$. In such a case, by a well-known property of series we conclude that there exists $M\in \mathbb R$ satisfying $|a_nw^n|\leq M$, for all $n\in \mathbb N$. Hence, $\sum_{n=0}^{+\infty}|a_n||z|^n= 
\sum_{n=0}^{+\infty}|a_n||w|^n(|z|/|w|)^n\leq M\sum_{n=0}^{+\infty}\Big(|z|/|w|)^n$. Since the geometric series $\sum_{n=0}^{+\infty}(|z|/|w|)^n$ converges, the series  $\sum_{n=0}^{+\infty}|a_n||z|^n$ also does.
\item[(b)] It is trivial.
\end{itemize}
\end{proof}

\begin{remark} Given a subset $Z\subset \mathbb C$, we say that a sequence of functions $f_n: Z \to \mathbb C$, where $n\in \mathbb N$, converges uniformly to a function $f:Z\to \mathbb C$ if given any $\epsilon >0$, then there exists $N=N(\epsilon)\in \mathbb N$ such that for all $n\geq N$ and all $z\in Z$ we have $|f_n(z) -f(z)|< \epsilon$. Keeping the hypothesis in Theorem \ref{TEO72}, it can be shown that the power series $\sum_{n=0}^{+\infty}a_nz^n$ converges uniformly within $\overline{D}(0;r)$, for all $0<r<\rho$ (see any book in the references).     
\end{remark}

\begin{definition}\label{DEF74} Let us consider $\sum_{n=0}^{+\infty} a_nz^n$ and $\rho$ as in Theorem \ref{TEO72}.
\begin{itemize}
\item[$\circ$] The \textit{radius of convergence} of the power series is $\rho$.
\item[$\circ$] The (open) \textit{disk of convergence} of $\sum_{n=0}^{+\infty} a_nz^n$ is $D(0;\rho)$, if $0<\rho<\infty$. 
\end{itemize}
\end{definition}

\begin{remark} Let $\sum_{n=0}^{+\infty}a_nz^n$ be a power series with disk of convergence $D(0;\rho)$, where $\rho > 0$, and $z\in D(0;\rho)$. By Theorem \ref{TEO72}, the series $\sum_{n=0}^{+\infty}a_nz^n$ converges absolutely. Hence, the sequence $(a_nz^n)_{n \in \mathbb N}$ is summable and $\sum a_nz^n =\sum_{n=0}^{+\infty} a_nz^n$. In such a case, we also write the power series as (briefly) $\sum a_nz^n$. 
\end{remark}

Given two power series $\sum a_n z^n$ and $\sum b_nz^n$ both convergent inside $D(0;r)$, with $r>0$, it is clear that their sum $\sum(a_n+b_n)z^n=\sum a_nz^n + \sum b_nz^n$ is a power series that converges in $D(0;r)$. Moreover, given $\lambda\in \mathbb C$, the power series $\sum \lambda a_nz^n=\lambda \sum a_nz^n$ also converges inside $D(0;r)$.

The next result is called the rearrangement theorem on power series and claims that every power series can be developed as a power series centered at every point inside its disk of convergence. This fact is not obvious.

\begin{theorem}\label{TEO76} ({\sf Rearrangement}) Let $f(z)= \sum a_nz^n$ be convergent in $D(0;\rho)$, where $\rho>0$, and $z_0\in D(0;\rho)$. Then, there exists a complex sequence $(b_n)$ satisfying
\[f(z) = \sum b_n(z-z_0)^n \,,\ \  \ \textrm{for all} \ z \in D(z_0;\rho - |z_0|).\]
\end{theorem}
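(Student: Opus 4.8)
The plan is to expand each power $z^{n}$ around the new center $z_{0}$ by the elementary binomial theorem and then to reorganize the resulting double family by powers of $z-z_{0}$, using the associative law for unordered sums. Fix $z\in D(z_{0};\rho-|z_{0}|)$ (this disk is nonempty, since $|z_{0}|<\rho$), put $w=z-z_{0}$, so that $|w|<\rho-|z_{0}|$ and hence
\[ r:=|z_{0}|+|w|<\rho , \]
and define a double sequence $(a_{nk})_{\mathbb{N}\times\mathbb{N}}$ by $a_{nk}=\binom{n}{k}a_{n}\,z_{0}^{\,n-k}\,w^{k}$ for $0\le k\le n$ and $a_{nk}=0$ for $k>n$. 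The idea is that summing this family ``by rows'' recovers $f(z)=\sum a_{n}z^{n}$, while summing it ``by columns'' exhibits $f(z)$ as a power series in $z-z_{0}$.

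First I would establish absolute summability of $(a_{nk})$ through Lemma~\ref{LE410}(c). Given any finite $F\subset\mathbb{N}\times\mathbb{N}$, let $N=\max\{n:(n,k)\in F\ \textrm{for some}\ k\}$; using the identity $\sum_{k=0}^{n}\binom{n}{k}|z_{0}|^{n-k}|w|^{k}=(|z_{0}|+|w|)^{n}=r^{n}$ one gets
\[ \sum_{(n,k)\in F}|a_{nk}|\ \le\ \sum_{n=0}^{N}|a_{n}|\,r^{n}\ \le\ \sum_{n=0}^{+\infty}|a_{n}|\,r^{n}\ =:M<\infty, \]
the last series being finite by Theorem~\ref{TEO72}(a) since $r<\rho$ (the case $\rho=+\infty$ included). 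Hence $(a_{nk})$ is summable, and every subfamily of it is summable by Corollary~\ref{COR412}.

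Next I would compute $\sum a_{nk}$ in two ways via the associative law for summable families, Proposition~\ref{PRO61}(a). Partitioning $\mathbb{N}\times\mathbb{N}$ into the rows $\{n\}\times\mathbb{N}$ and summing each row by the binomial theorem gives $\sum_{k}a_{nk}=a_{n}(z_{0}+w)^{n}=a_{n}z^{n}$, whence $\sum a_{nk}=\sum_{n=0}^{+\infty}a_{n}z^{n}=f(z)$. Partitioning instead into the columns $\mathbb{N}\times\{k\}$ gives $\sum_{n}a_{nk}=w^{k}b_{k}$, where
\[ b_{k}:=\sum_{n=k}^{+\infty}\binom{n}{k}a_{n}\,z_{0}^{\,n-k}, \]
a series that converges absolutely (it indexes a subfamily of the summable family $(a_{nk})$, up to the factor $w^{k}$) and, crucially, does \emph{not} depend on $z$. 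Comparing the two evaluations yields $f(z)=\sum_{k}b_{k}(z-z_{0})^{k}$ for every $z\in D(z_{0};\rho-|z_{0}|)$, and the same computation performed with absolute values shows this power series converges absolutely there.

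The main obstacle is precisely the absolute-summability estimate: one must notice that the $k$-sum of $\binom{n}{k}|z_{0}|^{n-k}|w|^{k}$ collapses to $r^{n}$ with $r=|z_{0}|+|w|$ \emph{still} strictly below $\rho$, so that Theorem~\ref{TEO72}(a) may be applied on the circle $|z|=r$ rather than only inside the translated disk $D(z_{0};\rho-|z_{0}|)$. Once this estimate is in place, the theorem reduces to applying Proposition~\ref{PRO61}(a) to the two obvious partitions of $\mathbb{N}\times\mathbb{N}$, and the independence of the coefficients $b_{k}$ from $z$ is read off directly from their definition.
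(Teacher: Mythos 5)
Your proposal is correct and follows essentially the same route as the paper: binomial expansion of $z^n=(z_0+(z-z_0))^n$, absolute summability of the double family $\bigl(\binom{n}{k}a_nz_0^{n-k}(z-z_0)^k\bigr)$ via the bound $\sum_n|a_n|(|z_0|+|z-z_0|)^n<\infty$, and then the associative law applied to the row and column partitions of $\mathbb N\times\mathbb N$. The only (harmless) difference is that you verify summability directly from the finite-subset criterion of Lemma~\ref{LE410}(c), whereas the paper reads it off from the equality of the two unordered sums of non-negative terms; your explicit remark that the coefficients $b_k$ are independent of $z$ is a welcome touch the paper leaves implicit.
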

\begin{proof} Let us consider $z$ such that $|z_0| + |z - z_0|<\rho$. Since the given power series converges absolutely within $D(0;\rho)$, we conclude that the values of the two unordered sums
\[ \sum_n |a_n|\big(\,|z_0|+|z-z_0|\,\big)^n \ =\  \sum_n\sum_{0\leq p\leq n}|a_n|\binom{n}{p}|z_0|^{n-p}\,|z-z_0|^{p}  \]
are equal and finite. Thus, the family $\big(|a_n|\binom{n}{p}|z_0|^{n-p}\,|z-z_0|^{p}\big)$, where $n\in \mathbb N$ and $0\leq p\leq n$, is summable. Employing Proposition \ref{PRO61} and Lemma \ref{LE410} we deduce the identities
\begin{displaymath}
\sum\limits_n\sum\limits_{0\leq p\leq n}a_n\binom{n}{p}z_0^{n-p}\,(z-z_0)^{p}
= \left\{\begin{array}{ll}
\sum\limits_n a_n(z_0\,+\,z-z_0)^n = \sum\limits_{n}a_nz^n=f(z),\\
\\
\sum\limits_{p=0}^{+\infty}\left(\sum\limits_{n=p}^{+\infty}a_n\binom{n}{p}z_0^{n-p}\right)(z-z_0)^p.\\
\end{array} 
\right.
\end{displaymath}
Moreover, we find that  $b_p=\sum_{n=p}^{+\infty}a_n\binom{n}{p}z_0^{n-p}$, for all $p$ in $\mathbb N$.
\end{proof}

As is the case with polynomials, we can multiply power series. 
\begin{theorem}\label{TEO77} ({\sf Cauchy Product}) Let $\sum\limits a_nz^n$ and $\sum\limits b_nz^n$ be convergent within $D(0;r)$, where $r>0$. Then, we have 
\begin{displaymath}
\left\{\begin{array}{ll} 
(\,\sum\limits a_nz^n\,)(\,\sum\limits b_nz^n\,)= \sum c_n z^n,\ \textrm{for all}\ z \in D(0;r),\\
\\
\textrm{where}\ c_n = \sum\limits_{j+k =n}a_jb_k,\ \textrm{for all}\ n\in \mathbb N.
\end{array}
\right.
\end{displaymath}
\end{theorem}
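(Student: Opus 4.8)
The plan is to fix $z \in D(0;r)$ and exploit the fact, already in hand, that both series converge absolutely there. First I would observe that $\sum |a_j z^j| < \infty$ and $\sum |b_k z^k| < \infty$, so by Corollary \ref{COR63} (applied to the numerical series $\sum a_n z^n$ and $\sum b_n z^n$, whose terms are $a_j z^j$ and $b_k z^k$) the double family $\big(a_j z^j \, b_k z^k\big)_{(j,k)\in\mathbb{N}\times\mathbb{N}}$ is absolutely summable and its unordered sum equals the product $\big(\sum a_j z^j\big)\big(\sum b_k z^k\big)$.

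Next I would apply the associative law for unordered sums, Proposition \ref{PRO61} (a), using the partition $(J_n)_{n\in\mathbb{N}}$ of $\mathbb{N}\times\mathbb{N}$ given by $J_n=\{(j,k): j+k=n\}$, exactly the partition used in the proof of Corollary \ref{COR63} (b). This groups the terms as $\sum_{n} \sum_{(j,k)\in J_n} a_j z^j \, b_k z^k$. Inside each block $J_n$ the factor $z^{j+k} = z^n$ is constant, so $\sum_{(j,k)\in J_n} a_j z^j b_k z^k = \big(\sum_{j+k=n} a_j b_k\big) z^n = c_n z^n$. Combining this with Corollary \ref{COR63} (b) (or simply re-reading its conclusion) gives that $\sum c_n z^n$ converges absolutely and equals $\big(\sum a_j z^j\big)\big(\sum b_k z^k\big)$, which is the claimed identity; moreover this holds for every $z\in D(0;r)$, so $\sum c_n z^n$ is a power series convergent in $D(0;r)$.

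Honestly, there is very little obstacle here: the statement is essentially Corollary \ref{COR63} with the extra bookkeeping that the coefficients now carry powers of $z$. The only point requiring a word of care is making explicit that, for fixed $z$, the Cauchy product of the numerical series $\sum a_n z^n$ and $\sum b_n z^n$ has $n$-th coefficient $\sum_{j+k=n} (a_j z^j)(b_k z^k) = c_n z^n$, so that Definition \ref{DEF62} and Corollary \ref{COR63} (b) apply verbatim; once that identification is made, the result is immediate. I would therefore present the proof as a short two-line appeal to Corollary \ref{COR63}, isolating the factorization $(a_j z^j)(b_k z^k)=a_j b_k z^{j+k}$ as the one computational remark worth stating.
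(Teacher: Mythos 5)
Your proposal is correct and follows essentially the same route as the paper: fix $z\in D(0;r)$, invoke absolute convergence from Theorem \ref{TEO72}, apply Corollary \ref{COR63} to the numerical series with terms $a_jz^j$ and $b_kz^k$, and observe that on each block $j+k=n$ the factor $z^{j+k}=z^n$ pulls out to give $c_nz^n$. Nothing is missing.
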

\begin{proof} Let us fix $z \in D(0;r)$. From Theorem \ref{TEO72} follow that $\sum a_nz^n$ and $\sum b_nz^n$ are both absolutely convergent. Thus, by Corollary \ref{COR63} we conclude that 
\[(\sum\limits_na_nz^n)(\sum\limits_nb_nz^n)= \sum\limits_n\,(\sum\limits_{j +k =n}a_jz^jb_kz^k)= 
\sum\limits_n\,(\sum\limits_{j +k =n}a_jb_k)z^n .\]
\end{proof}

\begin{corollary}\label{COR78} ({\sf Pth Power)} Let $\sum a_nz^n$ be convergent in $D(0;r)$, with $r>0$, and $p \in\mathbb N$. Then, for every $z\in D(0;r)$ the following identity holds
\[\big( \sum a_nz^n\big)^p = \sum b_nz^n, \ \textrm{where}\ b_n=\sum_{n_1+ \cdots +n_p=n}a_{n_1} \ldots a_{n_p} . \]
\end{corollary}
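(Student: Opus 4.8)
\textbf{Proof proposal for Corollary \ref{COR78}.}

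The plan is to prove this by induction on the exponent $p$, using Theorem \ref{TEO77} (Cauchy Product) as the engine for the inductive step. The base cases $p=0$ and $p=1$ are immediate: for $p=0$ the left side is the constant power series $1$ and the stated formula for $b_n$ gives $b_0=1$ (empty product) and $b_n=0$ for $n\geq 1$, since there is no way to write $n\geq 1$ as a sum of zero nonnegative integers; for $p=1$ the formula trivially returns $b_n=a_n$. So the content is entirely in passing from $p$ to $p+1$.

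For the inductive step, first I would observe that by the induction hypothesis $\big(\sum a_nz^n\big)^p = \sum b_n z^n$ converges in $D(0;r)$ with $b_n=\sum_{n_1+\cdots+n_p=n}a_{n_1}\cdots a_{n_p}$. Then $\big(\sum a_nz^n\big)^{p+1} = \big(\sum b_nz^n\big)\big(\sum a_nz^n\big)$, and applying Theorem \ref{TEO77} to these two power series (both convergent in $D(0;r)$) gives $\big(\sum a_nz^n\big)^{p+1} = \sum c_nz^n$ with $c_n = \sum_{j+k=n} b_j a_k$ for all $z\in D(0;r)$. It remains to check the combinatorial identity $c_n = \sum_{n_1+\cdots+n_{p+1}=n} a_{n_1}\cdots a_{n_{p+1}}$. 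Substituting the expression for $b_j$:
\[
c_n = \sum_{j+k=n}\Big(\sum_{n_1+\cdots+n_p=j}a_{n_1}\cdots a_{n_p}\Big)a_k = \sum_{j+k=n}\ \sum_{n_1+\cdots+n_p=j}a_{n_1}\cdots a_{n_p}a_k,
\]
and collapsing the two summations (setting $n_{p+1}=k$) shows the double sum ranges exactly over all $(p+1)$-tuples $(n_1,\ldots,n_{p+1})$ of natural numbers with $n_1+\cdots+n_{p+1}=n$, which is the desired formula.

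I expect the main (and only genuine) obstacle to be the bookkeeping in the last step: verifying that the two-stage indexing $\{j+k=n\}\times\{n_1+\cdots+n_p=j\}$ is in natural bijection with the single-stage indexing $\{n_1+\cdots+n_{p+1}=n\}$, and that this rearrangement of finitely many terms is legitimate. Since each $c_n$ and each $b_j$ is a \emph{finite} sum, no convergence subtlety arises here — the rearrangement is just associativity and commutativity of finite sums in $\mathbb{C}$ — so this is routine once stated carefully. One could alternatively phrase the whole argument in one shot via Proposition \ref{PRO61}(a) applied to the family $\big(a_{n_1}\cdots a_{n_p}z^{n_1+\cdots+n_p}\big)$ indexed by $\mathbb{N}^p$, partitioned according to the value of $n_1+\cdots+n_p$, after checking absolute summability from $\big(\sum|a_n||z|^n\big)^p<\infty$ via Theorem \ref{TEO51}; but the induction route is shorter and reuses Theorem \ref{TEO77} directly, so that is the approach I would present.
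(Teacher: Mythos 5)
Your proof is correct, but it takes a genuinely different route from the paper. The paper proves the $p$th-power formula in one shot: fixing $z$, it observes that $\bigl(\sum|a_n||z|^n\bigr)^p=\sum_{n_1,\ldots,n_p}|a_{n_1}||z|^{n_1}\cdots|a_{n_p}||z|^{n_p}<\infty$ (via Theorem \ref{TEO51}), concludes that the family $\bigl(a_{n_1}z^{n_1}\cdots a_{n_p}z^{n_p}\bigr)$ indexed by $\mathbb N^p$ is summable, and then applies the associative law (Corollary \ref{COR52}) to two different partitions of $\mathbb N^p$ --- one by coordinates, giving $\bigl(\sum a_nz^n\bigr)^p$, and one by the level sets of $n_1+\cdots+n_p$, giving $\sum_n b_nz^n$. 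This is exactly the ``alternative'' you sketch in your closing remark, and it is the template the paper reuses immediately afterwards for the composition theorem (Theorem \ref{TEO79}), which is presumably why the author prefers it. Your induction on $p$ through Theorem \ref{TEO77} is equally valid: the base cases are fine, the induction hypothesis does supply the convergence of $\sum b_nz^n$ on all of $D(0;r)$ needed to invoke the Cauchy product again, and the final reindexing $\{j+k=n\}\times\{n_1+\cdots+n_p=j\}\leftrightarrow\{n_1+\cdots+n_{p+1}=n\}$ is a bijection between finite index sets, so no summability issue arises there. What your route buys is economy of machinery (only the two-factor Cauchy product is needed); what the paper's route buys is a single multi-index summable family that generalizes directly to the composition and reciprocal theorems without any induction or combinatorial bookkeeping.
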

\begin{proof} Let us fix $z \in D(0;r)$. It is clear that
\[\infty > \big(\,\sum|a_n||z|^n\,\big)^p = 
\sum_{n_1\in\, \mathbb N, \ldots ,n_p\in\,\mathbb N} |a_{n_1}||z|^{n_1} \ldots |a_{n_p}||z|^{n_p} .\]
Therefore, the family $(a_{n_1}z^{n_1} \ldots a_{n_p}z^{n_p})$, where $n_1, \ldots, n_p$ run over $\mathbb N$, is summable. Employing Corollary \ref{COR52} (associative law) we deduce the identities
\begin{displaymath}
\sum\limits_{n_1, \ldots, n_p}a_{n_1}z^{n_1} \ldots a_{n_p}z^{n_p} =
\left\{\begin{array}{ll}
\big(\sum\limits_{n_1}a_{n_1}z^{n_1}\big)\ldots \big(\sum\limits_{n_p}a_{n_p}z^{n_p}\big)\ =\ \big(\sum\limits_{n}a_nz^n\,\big)^p\\
\\
\sum\limits_n\Big(\sum\limits_{n_1 + \cdots + n_p=n}a_{n_1} \ldots a_{n_p}\Big)\,z^n.
\end{array}
\right.
\end{displaymath}
\end{proof}

Next, we show that analogously to polynomials we can compose power series.

\begin{theorem}\label{TEO79}{\sf (Composition)} Let $f(z) = \sum\limits a_nz^n$ and $g(z) =\sum\limits b_mz^m$ be two convergent power series within $D(0;R)$, with $R>0$. If $|g(0)|<R$, then there exists a complex sequence $(c_m)$ and $r >0$ such that
\[f(g(z)) = \sum c_mz^m,\ \textrm{for all}\ z \in D(0;r) . \]
\end{theorem}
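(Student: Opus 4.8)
The plan is to reduce everything to an absolute convergence estimate and then invoke the associative law (Corollary \ref{COR52}) to rearrange a triple sum, exactly in the spirit of the proofs of Theorem \ref{TEO76} and Corollary \ref{COR78}. Write $b_0 = g(0)$, so $|b_0| < R$, and choose $r > 0$ small enough that $|b_0| + \sum_{m=1}^{+\infty} |b_m|\,r^m < R$; this is possible because $\sum_{m\ge 1}|b_m|\rho^m \to 0$ as $\rho \to 0^+$ (the tail of a power series convergent near $0$ vanishes), so by continuity we can absorb the constant term $|b_0|$. Fix any $z$ with $|z| < r$ and set $w = g(z)$; then $|w| \le \sum_{m}|b_m|\,|z|^m \le |b_0| + \sum_{m\ge 1}|b_m| r^m < R$, so $f(g(z)) = \sum_{n} a_n w^n$ converges absolutely.

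Next I would expand each power $w^n = \big(\sum_m b_m z^m\big)^n$ using Corollary \ref{COR78} (the $p$th power formula), so that formally
\[
f(g(z)) \;=\; \sum_{n} a_n \sum_{m_1,\dots,m_n} b_{m_1}z^{m_1}\cdots b_{m_n} z^{m_n}.
\]
The crucial point is to justify that the full family $\big(a_n\, b_{m_1}\cdots b_{m_n}\, z^{m_1+\cdots+m_n}\big)$, indexed by $n\in\mathbb N$ and $(m_1,\dots,m_n)\in\mathbb N^n$ (a countable index set, being a countable union of finite cartesian products), is absolutely summable. This follows from the estimate: summing absolute values over all $(m_1,\dots,m_n)$ for fixed $n$ gives $|a_n|\big(\sum_m |b_m|\,|z|^m\big)^n$ by Corollary \ref{COR78} applied to $\sum |b_m||z|^m$, and then $\sum_n |a_n|\big(\sum_m|b_m||z|^m\big)^n < \infty$ because $\sum_m|b_m||z|^m \le |b_0| + \sum_{m\ge 1}|b_m|r^m < R$ lies inside the disk of convergence of $\sum|a_n|t^n$. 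Here I must be a little careful that the $n=0$ term is handled as the empty product contributing $a_0$, and I would note $\sum|b_m||z|^m$ is genuinely $< R$ (strictly), which is exactly what the choice of $r$ guarantees.

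With absolute summability in hand, Corollary \ref{COR52} lets me regroup this family in two ways. Grouping by the value of $n$ reproduces $\sum_n a_n w^n = f(g(z))$ via Corollary \ref{COR78}. Grouping instead by the value of the total exponent $s = m_1 + \cdots + m_n$ — that is, using the partition of the index set into the sets $J_s = \{(n; m_1,\dots,m_n) : m_1+\cdots+m_n = s\}$ — collects all terms with $z^s$ and yields $\sum_{s} c_s z^s$ with
\[
c_s \;=\; \sum_{n=0}^{+\infty} a_n \!\!\sum_{\substack{m_1+\cdots+m_n = s \\ m_i \ge 0}} \!\! b_{m_1}\cdots b_{m_n},
\]
a well-defined complex number since for each $s$ the inner double sum is absolutely convergent (again by the same estimate restricted to $J_s$). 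This gives $f(g(z)) = \sum c_s z^s$ for all $z \in D(0;r)$, as desired.

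I expect the main obstacle to be purely bookkeeping: verifying that the index set is countable and that the ``$p$th power'' identity of Corollary \ref{COR78} can be applied uniformly across all $n$ simultaneously (including the degenerate case $n=0$), and checking that the partition $\{J_s\}$ is legitimate (each $J_s$ nonempty — e.g. $(n;0,\dots,0)$ for any $n$, or $(0;)$ when $s=0$ — pairwise disjoint, covering everything). None of this requires any new analytic idea beyond the absolute-convergence bound $\sum_m |b_m||z|^m < R$ and the associative law; the only genuine analytic input is the choice of $r$, which rests on the vanishing of the tail $\sum_{m\ge 1}|b_m|\rho^m$ as $\rho\to 0^+$.
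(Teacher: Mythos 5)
Your proposal is correct and follows essentially the same route as the paper: the same choice of a radius $r$ forcing $\sum_m|b_m|\,|z|^m<R$ (the paper just writes an explicit $r< (\rho-|b_0|)/(1+\sum_{m\ge1}|b_m|\rho^{m-1})$ where you argue via the vanishing tail), the same dominating estimate $\sum_n|a_n|\bigl(\sum_m|b_m|\,|z|^m\bigr)^n<\infty$ establishing summability of the family $\bigl(a_nb_{m_1}\cdots b_{m_n}z^{m_1+\cdots+m_n}\bigr)$, and the same two regroupings via the associative law (Corollary \ref{COR52}). Your extra care about the $n=0$ term, the countability of the index set, and the legitimacy of the partition by total exponent is sound bookkeeping that the paper leaves implicit.
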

\begin{proof} By Theorem \ref{TEO72}, the power series $\sum a_nz^n$ and $\sum b_mz^m$ are both absolutely convergent inside $D(0;R)$.
Let us choose $\rho$ such that $|g(0)|=|b_0|<\rho<R$. Then, fixing $z$ in the open disk $D(0;r)$, where $0<r< (\rho - |b_0|)/(1+\sum_{m\geq 1} |b_m|\rho^{m-1})$, we obtain the trivial inequalities $\sum_{m\geq 1}|b_m||z|^m\leq r\sum_{m\geq 1}|b_m|\rho^{m-1}< \rho -|b_0|$. Hence, we deduce that $\sum_{m\geq 0} |b_m||z|^m<\rho<R$ and
\[\infty>\sum_n|a_n|\Big(\sum_m |b_m|\,|\,z|^{\,m}\Big)^n  = \sum_{n\in \mathbb N}|a_n|\sum_{m_1\in \mathbb N, \ldots, m_n\in \mathbb N}|b_{m_1}||z|^{m_1} \ldots |b_{m_n}||z|^{m_n} .\]
Thus, the family $(a_n b_{m_1}z^{m_1} \ldots b_{m_n}z^{m_n})$, where $n, m_1,m_2, \ldots, m_n \in \mathbb N$, is summable.
Employing Corollary \ref{COR52} (associative law) we find the equalities
\begin{displaymath}
\ \sum_{n}a_n\sum_{m_1, \ldots ,m_n}b_{m_1}z^{m_1} \ldots b_{m_n}z^{m_n}
=  \left\{\begin{array}{ll}
\sum\limits_n a_n\big(\sum\limits_m b_mz^m\big)^n  =  f\big(g(z)\big)\\
\\
\sum\limits_m\Big(\sum\limits_n\,a_n\sum\limits_{m_1+ \cdots +m_n=m}b_{m_1}\ldots b_{m_n}\Big)z^m.
\end{array}
\right.
\end{displaymath}
\end{proof}

The following proof is standard.
 
\begin{proposition}\label{PRO710}({\sf Reciprocal)} Let $f(z)= \sum a_nz^n$ be convergent in $ D(0;r)$, where $r>0$ and $a_0\neq 0$. Then, there exists $\delta >0$ and a complex sequence $(b_n)$ such that 
\[\frac{1}{f(z)} = \sum b_nz^n,  \ \textrm{for all}\  z \in D(0;\delta) .\] 
\end{proposition}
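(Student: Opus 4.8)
The plan is to normalize and then invert a geometric-type series. First I would write $f(z) = a_0\big(1 + h(z)\big)$, where $h(z) = \sum_{n\geq 1} (a_n/a_0) z^n$ is a power series with $h(0)=0$ that converges inside $D(0;r)$; since $a_0 \neq 0$, recovering $1/f(z)$ from $1/(1+h(z))$ only costs the scalar factor $1/a_0$. The point of this step is that $h$ is a power series with zero constant term, so by absolute convergence on $D(0;r)$ we can choose a small $\delta$ with $0<\delta<r$ such that $\sum_{n\geq 1}|a_n/a_0|\,\delta^n < 1$; then for every $z\in D(0;\delta)$ we have $\sum_{n\geq 1}|a_n/a_0|\,|z|^n \leq \theta$ for some fixed $\theta<1$, in particular $|h(z)| < 1$.

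Next I would plug $h$ into the geometric series $\sum_{k=0}^{+\infty} w^k$, which is itself a power series with radius of convergence $1$ and constant term $g(0)=0$ satisfies $|g(0)| = 0 < 1$. Invoking Theorem \ref{TEO79} (Composition) with the outer series $\sum_{k} w^k$ and the inner series $h$, there is some $r'>0$ and a complex sequence $(c_n)$ with $\sum_{k=0}^{+\infty} h(z)^k = \sum c_n z^n$ for all $z\in D(0;r')$; shrinking $\delta$ further so that $\delta \leq r'$, this representation holds on $D(0;\delta)$. Because $|h(z)|<1$ there, the left-hand side equals $1/\big(1 - (-h(z))\big)$... more precisely, one uses that $\sum_{k\geq 0}\big(-h(z)\big)^k = 1/(1+h(z))$; so composing the outer series $\sum_k (-w)^k$ (still radius $1$, still zero constant term) with $h$ gives $1/(1+h(z)) = \sum c_n z^n$ on $D(0;\delta)$. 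Finally setting $b_n = c_n/a_0$ yields $1/f(z) = (1/a_0)\sum c_n z^n = \sum b_n z^n$ for all $z\in D(0;\delta)$.

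Alternatively, one can avoid quoting the Composition theorem and argue directly: on $D(0;\delta)$ the double family $\big((-1)^k (a_{n_1}/a_0)z^{n_1}\cdots (a_{n_k}/a_0)z^{n_k}\big)$, indexed over $k\in\mathbb{N}$ and $n_1,\dots,n_k\geq 1$, is absolutely summable because its absolute sum is $\sum_{k\geq 0}\big(\sum_{n\geq 1}|a_n/a_0|\,|z|^n\big)^k \leq \sum_{k\geq 0}\theta^k = 1/(1-\theta)<\infty$; then Corollary \ref{COR52} (associative law) lets one group this family either as $\sum_{k\geq 0}(-h(z))^k = 1/(1+h(z))$ or by collecting all monomials of a fixed total degree $n$, which produces the coefficients $b_n$ (a finite sum for each $n$, since each $n_i\geq 1$ forces $k\leq n$). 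This is really the same computation as in the proof of Theorem \ref{TEO79} specialized to $a_n = (-1)^n$.

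The main obstacle is purely the bookkeeping at the start: one must be careful to first extract the nonzero constant $a_0$ and only afterward estimate, since the geometric/composition machinery needs an inner series with zero constant term (equivalently $|g(0)|<R$ with $R=1$), and one needs the explicit smallness $\sum_{n\geq 1}|a_n/a_0|\,|z|^n<1$ to legitimately identify $\sum_k(-h(z))^k$ with $1/(1+h(z))$. Choosing $\delta$ small enough to guarantee this, and smaller still than the radius $r'$ coming out of Theorem \ref{TEO79}, is the only delicate point; everything else is a direct application of the absolute-summability and associativity results already established. No continuity or differentiability is used, consistent with the spirit of Section 7.
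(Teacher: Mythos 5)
Your proposal is correct and follows essentially the same route as the paper: after normalizing away the nonzero constant term $a_0$, you compose the geometric series with the zero-constant-term remainder of $f$ via Theorem \ref{TEO79}, exactly as the paper does with $g(z)=1-f(z)$ and $h(w)=(1-w)^{-1}=\sum w^k$ (your sign conventions differ only cosmetically). The extra smallness estimates you impose on $\delta$ are already built into the proof of Theorem \ref{TEO79}, so they are harmless redundancy rather than a necessary addition.
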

\begin{proof}
Without loss of generality we can assume that $a_0=1$. Hence, the functions $g(z)= 1 -f(z)= -\sum_{n\geq 1}a_n z^n$  and $h(z)= (1-z)^{-1}= \sum_{n\geq 0}z^n$ are both defined in an open disk containing the origin and $g(0)=0$. Thus, by Theorem ~\ref{TEO79} there exists $\delta>0$ such that the composition function $h(g(z))=\{1 -[1 -f(z)]\}^{-1}=1/f(z)$ can be developed as a convergent power series inside $D(0;\delta)$.
\end{proof}

\section{Power Series - Analytic Properties}

\begin{definition}\label{DEF81} Let $f: \Omega \to \mathbb C$ be defined in an open set $\Omega \subset \mathbb C$ and $z_0 \in \Omega$. We say that $f$ is \textit{complex-differentiable} at a point $z_0\in \Omega$ if the limit  
\[ f'(z_0) = \lim\limits_{z\to z_0}\frac{f(z) - f(z_0)}{z- z_0}\]
exists. The number $f'(z_0)$ is the derivative of $f$ at $z_0$.
\end{definition}

\begin{remark} It is easy to prove that if $f'(z_0)$ exists, then $f$ is continuous at $z_0$. 
\end{remark}
For practical purposes, let us write the power series $\sum_{n\geq 1} na_nz^{n-1}$ as $\sum na_nz^{n-1}$. The next result shows that, as is the case with polynomials, power series can be differentiated term by term.

\begin{theorem} \label{TEO83}{\sf (Differentiation)} The power series $f(z) = \sum a_nz^n$ and $g(z) = \sum na_nz^{n-1}$ have same disk of convergence $D(0;\rho)$. If $\rho>0$, then we have 
\[f'(z) = g(z), \ \textrm{for all}\ z \in D(0;\rho).\]
\end{theorem}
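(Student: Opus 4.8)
The plan is to establish first that the two series have the same radius of convergence, and then to verify the derivative identity directly from the definition of complex-differentiability, using the machinery of unordered sums already developed.

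For the equality of the radii of convergence: let $\rho$ be the radius of convergence of $f(z)=\sum a_nz^n$ and $\rho'$ that of $g(z)=\sum na_nz^{n-1}$. Since $|a_nz^n|\le |na_nz^{n-1}|\,|z|$ for $n\ge 1$, absolute convergence of $g$ at $z$ forces absolute convergence of $f$ at $z$, giving $\rho'\le \rho$. For the reverse inequality, fix $z$ with $|z|<\rho$ and choose $w$ with $|z|<|w|<\rho$; since $\sum |a_n||w|^n<\infty$ the terms $|a_n||w|^n$ are bounded by some $M$, and then $n|a_n||z|^{n-1}\le (M/|w|)\,n(|z|/|w|)^{n-1}$, whose sum converges because $\sum n t^{n-1}<\infty$ for $0\le t<1$ (a geometric-type series, handled exactly as in the proof of Theorem \ref{TEO72}). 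Hence $\rho\le\rho'$, so $\rho=\rho'$.

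For the derivative identity, fix $z_0\in D(0;\rho)$ and pick $r$ with $|z_0|<r<\rho$. For $z\ne z_0$ with $|z|<r$, I would write the difference quotient as
\[
\frac{f(z)-f(z_0)}{z-z_0}-g(z_0)=\sum_{n\ge 1} a_n\left(\frac{z^n-z_0^n}{z-z_0}-nz_0^{n-1}\right),
\]
using linearity of unordered sums (Proposition \ref{PRO413}) and the fact that each series involved is absolutely convergent. Then, via the algebraic factorization $z^n-z_0^n=(z-z_0)\sum_{k=0}^{n-1}z^kz_0^{n-1-k}$, the parenthesized term becomes $\sum_{k=0}^{n-1}(z^k-z_0^k)z_0^{n-1-k}$, and a second application of the same factorization to $z^k-z_0^k$ shows this quantity is $(z-z_0)$ times a finite sum of monomials in $z,z_0$, each of total degree $n-2$, so it is bounded in absolute value by $|z-z_0|\binom{n}{2}r^{n-2}$ (a crude but sufficient estimate on $D(0;r)$). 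Summing, $\bigl|\tfrac{f(z)-f(z_0)}{z-z_0}-g(z_0)\bigr|\le |z-z_0|\sum_{n\ge 2}|a_n|\binom{n}{2}r^{n-2}$, and the latter sum is finite since $r<\rho=\rho'$ (indeed it is comparable to the radius of convergence of the second derivative series, bounded just as above). Letting $z\to z_0$ gives $f'(z_0)=g(z_0)$.

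The main obstacle is the double-factorization estimate: one must justify interchanging the finite inner sum with the infinite outer sum (legitimate here because everything is absolutely convergent, so Corollary \ref{COR52} and Proposition \ref{PRO61} apply), and one must get a clean bound on $\bigl|\tfrac{z^n-z_0^n}{z-z_0}-nz_0^{n-1}\bigr|$ that is $O(|z-z_0|)$ with an $n$-dependent constant summable against $|a_n|$ on a disk strictly inside $D(0;\rho)$. Once the bound $|z-z_0|\binom{n}{2}r^{n-2}$ is in hand, the convergence of $\sum |a_n|\binom{n}{2}r^{n-2}$ follows by the same geometric-majorant trick used twice already, so no genuinely new idea is needed — only care with the bookkeeping of finite sums inside summable families.
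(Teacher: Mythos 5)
Your proposal is correct, and it follows the same overall strategy as the paper — fix a point in the disk, expand the difference quotient termwise by linearity of unordered sums, and bound the $n$-th term by $O(|z-z_0|)$ times a constant that is still summable against $|a_n|$ on a slightly larger disk. The differences are in the two supporting computations. For the key termwise estimate, the paper writes $h=z-z_0$ and uses the binomial expansion
\[
\frac{(z_0+h)^n-z_0^n}{h}-nz_0^{n-1}=h\sum_{p=2}^{n}\binom{n}{p}z_0^{\,n-p}h^{\,p-2},
\]
which it bounds by $\tfrac{|h|}{r^2}R^n$ with $R=|z_0|+r$, so the majorizing series is just $\sum|a_n|R^n$, i.e.\ the original series evaluated at a larger radius; you instead apply the telescoping factorization $z^n-z_0^n=(z-z_0)\sum_k z^kz_0^{\,n-1-k}$ twice to get the bound $|z-z_0|\binom{n}{2}r^{n-2}$, whose summability against $|a_n|$ needs one more run of the boundedness-plus-geometric-majorant trick (your ``second derivative series''). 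Both work; the paper's version is slightly more economical because the same inequality simultaneously yields the absolute convergence of $\sum na_nz^{n-1}$ inside the disk of $f$, whereas you prove $\rho=\rho'$ separately up front via the standard comparison $n|a_n||z|^{n-1}\le (M/|w|)\,n(|z|/|w|)^{n-1}$ (which, note, needs the convergence of $\sum nt^{n-1}$ for $0\le t<1$, a small step beyond the plain geometric series used in Theorem~\ref{TEO72}, but entirely routine). Your invocation of Corollary~\ref{COR52} and Proposition~\ref{PRO61} is unnecessary: the inner sums are finite, so only linearity (Proposition~\ref{PRO413}) and termwise bounding are actually used.
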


\begin{proof} Let us split the proof into two parts that complement each other.
\begin{itemize}
\item[(1)] From the inequality $\sum_{n\geq 1} |a_nz^n|\leq |z|\sum_{n\geq 1}|na_nz^{n-1}|$ it follows that the disk of convergence of $g$ is contained in the disk of convergence of $f$. Thus, if the disk of convergence of $f$ degenerates then so does the disk of convergence of $g$.

\item[(2)] Now, let us suppose that $f$ is convergent in $D(0;\tau)$, where $\tau>0$. Let us fix $R>0$ and $z\in \mathbb C$ such that $|z|<R<\tau$. Let us also consider an arbitrary $h\in \mathbb C$ satisfying $0<|h|<r=R-|z|$. Hence, given $n\in\{2,3,4,...\}$ we have 
\[\frac{(z+h)^n -z^n}{h} = nz^{n-1} \, +\, h\sum\limits_{p=2}^n\binom{n}{p}z^{n-p}h^{p-2}\]
and 
\[\left|\frac{(z+h)^n -z^n}{h}\ -\ nz^{n-1}\right|  \, \leq \, \frac{|h|}{r^2}\ \sum_{p=2}^n\,\binom{n}{p}\,|z|^{n-p}\,r^p\  \leq \frac{|h|\,}{r^2}R^n .\]
From the above inequality and the choices of $R$, $z$, and $h$ it follows that $\sum na_nz^{n-1}$ converges absolutely. Thus, $g$ converges in $D(0;\tau)$. Moreover,
\[\left|\,\sum a_n\frac{(z+h)^n - z^n}{h}\,-\, \sum na_nz^{n-1}\,\right| \, \leq \, \frac{|h|}{r^2}\sum|a_n|R^n\ \xrightarrow{h\to 0}\ 0.\] 
\end{itemize}
\end{proof}

\begin{corollary}\label{COR84} Let $f(z) = \sum a_nz^n$ be convergent in $D(0;\rho)$, with $\rho >0$. Then, $f$ is infinitely  differentiable and the following assertions are true.
\begin{itemize}
\item[(a)] $f^{(k)}(z) = \sum\limits_{n\geq k}n(n-1) \ldots (n-k+1)a_nz^{n-k} =\sum\limits_{n\geq k}\frac{n!}{(n-k)!}a_nz^{n-k}$.
\item[(b)] $f$ is given by its {\sf Taylor series} centered at the origin,
\[f(z)= \sum_{n\geq 0}\frac{f^{(n)}(0)}{n!}z^n.\]
\item[(c)] ({\sf Uniqueness Theorem for the Coefficients}) If $f$ is identically zero, then all the coefficients $a_n$, where $n\in \mathbb N$, are zero.
\end{itemize}
\end{corollary}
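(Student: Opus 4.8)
The plan is to derive all three parts of Corollary~\ref{COR84} directly from the Differentiation Theorem (Theorem~\ref{TEO83}) by iteration, with no further analytic input. First I would establish part~(a) by induction on $k$. The base case $k=0$ is trivial. For the inductive step, suppose $f^{(k)}(z)=\sum_{n\geq k}\frac{n!}{(n-k)!}a_nz^{n-k}$ holds throughout $D(0;\rho)$; this is a power series in $z$ (after the harmless reindexing $n\mapsto n+k$, its coefficient sequence is $\big(\frac{(n+k)!}{n!}a_{n+k}\big)_{n\geq 0}$) which, by the inductive hypothesis, converges on $D(0;\rho)$. Applying Theorem~\ref{TEO83} to this series shows it is differentiable on $D(0;\rho)$ with derivative obtained by term-by-term differentiation, i.e.\ $f^{(k+1)}(z)=\sum_{n\geq k}\frac{n!}{(n-k)!}(n-k)a_nz^{n-k-1}=\sum_{n\geq k+1}\frac{n!}{(n-k-1)!}a_nz^{n-k-1}$, which is the claimed formula for $k+1$. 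The same theorem simultaneously records that the differentiated series has disk of convergence $D(0;\rho)$, so the induction is self-sustaining; in particular $f$ is infinitely differentiable on $D(0;\rho)$.

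For part~(b), I would simply evaluate the formula in~(a) at $z=0$. In the series $f^{(k)}(z)=\sum_{n\geq k}\frac{n!}{(n-k)!}a_nz^{n-k}$ every term with $n>k$ carries a positive power of $z$ and hence vanishes at $z=0$, leaving only the $n=k$ term, which equals $\frac{k!}{0!}a_k=k!\,a_k$. Thus $f^{(k)}(0)=k!\,a_k$, i.e.\ $a_k=f^{(k)}(0)/k!$ for every $k\in\mathbb N$. Substituting back into $f(z)=\sum_{n\geq 0}a_nz^n$ yields the Taylor series expansion $f(z)=\sum_{n\geq 0}\frac{f^{(n)}(0)}{n!}z^n$ on $D(0;\rho)$. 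Here I should be slightly careful to note that evaluating a convergent power series at $z=0$ gives its constant term, which is immediate from the definition of the partial sums.

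Part~(c) is then an immediate consequence: if $f\equiv 0$ on $D(0;\rho)$, then all its derivatives vanish identically, so in particular $f^{(n)}(0)=0$ for all $n$, whence $a_n=f^{(n)}(0)/n!=0$ for all $n\in\mathbb N$ by part~(b).

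The only real obstacle is bookkeeping in the inductive step of~(a): one must correctly track that differentiating $\sum_{n\geq k}\frac{n!}{(n-k)!}a_nz^{n-k}$ produces the coefficient $\frac{n!}{(n-k)!}(n-k)=\frac{n!}{(n-k-1)!}$ and that the summation index shifts from $n\geq k$ to $n\geq k+1$ (the $n=k$ term differentiates to zero). One also needs to phrase the application of Theorem~\ref{TEO83} correctly: that theorem is stated for a series written as $\sum a_nz^n$, so strictly speaking one applies it to the reindexed series $\sum_{m\geq 0} b_m z^m$ with $b_m=\frac{(m+k)!}{m!}a_{m+k}$ and then translates back. Everything else is routine, and no $\epsilon$--$\delta$ argument beyond what is already inside Theorem~\ref{TEO83} is required.
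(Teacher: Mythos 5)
Your proof is correct and follows essentially the same route as the paper: iterate Theorem~\ref{TEO83} (by induction on $k$) to obtain infinite differentiability and item~(a), evaluate at $z=0$ to get $f^{(k)}(0)=k!\,a_k$ for item~(b), and deduce~(c) from~(b). The paper's own proof is just a terser version of this; your careful bookkeeping of the reindexing in the inductive step is exactly the detail it leaves implicit.
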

\begin{proof} From Theorem \ref{TEO83} follow that $f$ is infinitely differentiable and item (a).
\begin{itemize}
\item[(b)] Substituting $z=0$ into (a) we obtain $f^{(k)}(0)=k!a_k$. Thus, $a_k=f^{(k)}(0)/k!$.
\item[(c)] It follows from (b).
\end{itemize}
\end{proof}

\begin{remark} For a proof of the uniqueness theorem for the coefficients of a complex power series [Corollary ~\ref{COR84} (c)] that employs neither differentiability nor function continuity, we refer the reader to de Oliveira ~\cite[Remark 3.5]{OO3}.
\end{remark}

The next result (Lemma ~\ref{LEM86}) will be employed in the proof of Corollary ~\ref{COR87}.

\newpage

\begin{lemma} \label{LEM86} Let $f(z)=\sum a_nz^n$ be convergent in $D(0;\rho)$, where $\rho>0$. Let us suppose that $f(z)=0$ for all $z$ in a disk $D(z_0;\epsilon)$, where $\epsilon >0$, contained in $D(0;\rho)$. Then, f vanishes everywhere.
\end{lemma}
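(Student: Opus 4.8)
The plan is to use the Rearrangement Theorem (Theorem \ref{TEO76}) together with the Uniqueness Theorem for the coefficients (Corollary \ref{COR84}(c)) and a connectedness-style "propagation" argument along the segment from $z_0$ to an arbitrary target point, carried out quantitatively so as to avoid invoking connectedness as a named hypothesis (which the paper is careful to dodge). First I would expand $f$ about $z_0$: by Theorem \ref{TEO76} there is a sequence $(b_n)$ with $f(z)=\sum b_n(z-z_0)^n$ for all $z\in D(z_0;\rho-|z_0|)$. Since $f$ vanishes identically on the subdisk $D(z_0;\epsilon)$ (which lies inside the disk of validity of this expansion, as $D(z_0;\epsilon)\subset D(0;\rho)$ forces $\epsilon\le\rho-|z_0|$), Corollary \ref{COR84}(c) applied to the recentered series gives $b_n=0$ for every $n$; hence $f\equiv 0$ throughout $D(z_0;\rho-|z_0|)$. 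In particular the set of points about which $f$ has an identically-zero local power-series expansion contains a disk of the same "reach" around $z_0$.

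Next I would propagate. Fix an arbitrary $w\in D(0;\rho)$; I want $f(w)=0$. Choose a point $z_1$ on the segment $[z_0,w]$ close enough to the boundary-relevant constraint that $z_1\in D(z_0;\rho-|z_0|)$ (so $f\equiv 0$ near $z_1$ by the previous paragraph) while $|z_1|<\rho$ is as large as the geometry of $D(0;\rho)$ permits along that segment; repeating the recentering argument at $z_1$, $f$ vanishes on all of $D(z_1;\rho-|z_1|)$. Because the radii $\rho-|z_k|$ of these disks of vanishing are bounded below as long as $|z_k|$ stays bounded away from $\rho$, finitely many such steps along the segment $[z_0,w]$ carry the region of vanishing past $w$. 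More carefully: let $t^*=\sup\{t\in[0,1]: f\equiv 0 \text{ on a neighborhood of } z_0+s(w-z_0)\text{ for all }s\in[0,t]\}$; the recentering argument shows the set of such $t$ is open in $[0,1]$ and, since $f$ is continuous (being a convergent power series), also closed, and it is nonempty; hence $t^*=1$ and $f(w)=0$. Since $w$ was arbitrary, $f$ vanishes on $D(0;\rho)$.

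I expect the main obstacle to be handling the geometry cleanly without explicitly invoking connectedness or compactness of the segment, given the paper's stated constraints — in particular, phrasing the "finitely many steps suffice" part (the radii $\rho-|z_k|$ are uniformly bounded below on any segment staying strictly inside $D(0;\rho)$, so one can reach $w$ from $z_0$ in at most $\lceil |w-z_0|/\delta\rceil$ hops for a suitable uniform $\delta>0$) so that it reads as an elementary induction rather than a compactness argument. A secondary technical point is confirming that each recentered expansion is valid on a disk that genuinely overlaps the previous region of vanishing on an open set, so that Corollary \ref{COR84}(c) applies at every stage; this follows because each disk $D(z_k;\rho-|z_k|)$ is centered inside the previous such disk, hence the two disks intersect in a nonempty open set on which $f$ is already known to vanish.
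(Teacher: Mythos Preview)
Your argument is correct and shares the paper's core idea --- propagate the vanishing along a segment via repeated applications of the Rearrangement Theorem (Theorem~\ref{TEO76}) and the uniqueness of Taylor coefficients (Corollary~\ref{COR84}) --- but the execution differs in two tactical respects worth noting. First, the paper marches from $z_0$ toward the \emph{origin} rather than toward an arbitrary target $w$: once a disk of vanishing contains $0$, Corollary~\ref{COR84}(b) gives $a_n=f^{(n)}(0)/n!=0$ for all $n$, so $f\equiv 0$ on all of $D(0;\rho)$ in one stroke; this avoids your separate march for each $w$ and the accompanying need to bound $\rho-|z_k|$ below along a segment approaching the boundary. Second, the paper takes steps of the \emph{fixed} size $\epsilon$, placing each new center $z_{j+1}$ exactly on the boundary of $D(z_j;\epsilon)$ and invoking continuity of the derivatives to get $f^{(n)}(z_{j+1})=0$; this makes the step count simply $\lceil |z_0|/\epsilon\rceil$. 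Your version instead places $z_{k+1}$ strictly inside the previous disk and appeals to Corollary~\ref{COR84}(c) on the overlap, which sidesteps the continuity-at-the-boundary step but forces you to manage the variable radii $\rho-|z_k|$ and argue a uniform lower bound (your convexity observation $|z_k|\le\max(|z_0|,|w|)$ handles this). Both routes are sound; the paper's is a bit more economical, while yours is arguably cleaner at each individual step since it never touches a boundary point.
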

\begin{proof} By Corollary \ref{COR84}, it is enough to show that $f$ is identically zero over an open disk containing the origin. If $0\in D(z_0;\epsilon)$, then the proof is complete. Hence, let us suppose that $0\notin D(z_0;\epsilon)$ and fix the segment $\overline{0z_0}$, joining $0$ and $z_0$. 

The point $z_1= z_0 -\epsilon \frac{z_0}{|z_0|}$ belongs to the segment $\overline{0z_0}$ and satisfies $d(z_1;0)=|z_0|-\epsilon$. From Theorem \ref{TEO76} (the Rearrangement Theorem) we conclude that there exists a power series centered at $z_1$ and punctually convergent to $f$, at every point inside $D(z_1;\epsilon)$. Since $f$ and its derivatives are continuous and vanish at every point inside $D(z_0;\epsilon)$, we deduce that $f^{(n)}(z_1)=0$ for all $n\in \mathbb N$. Thus, by Corollary ~\ref{COR84}, $f$ is identically zero over $D(z_1;\epsilon)$. If $0\in D(z_1;\epsilon)$, then the proof is complete. 

If $0\notin D(z_1;\epsilon)$, by proceeding as in the argument above a finite number of times we find a finite sequence of points $z_j= z_0 - j\epsilon\frac{z_0}{|z_0|}$, where $j\in \{0,1,\ldots, N\}$, within the segment $\overline{0z_0}$ and satisfying the following four conditions: $f$ is identically zero over $D(z_j;\epsilon)$ for all $j\in \{0,1,\ldots, N\}$, $d(z_j;0)= |z_0|- j\epsilon \geq \epsilon$ for each $j\in \{0,1,\ldots, N-1\}$, $d(z_N;0)<\epsilon$, and  $f^{(n)}(z_j)=0$ for all $n\in \mathbb N$ and all $j\in\{0,1,\ldots,N\}$.  Since $0\in D(z_N;\epsilon)$, the proof is complete. 
\end{proof}

\vspace{0,2 cm}

The following corollary shows that, analogously to polynomials, nonzero power series have isolated zeros.

\begin{corollary}\label{COR87} ({\sf Principle of Isolated Zeros)} Let
$f(z)=\sum a_nz^n$ be a power series convergent within $D(0;\rho)$, where $\rho>0$, such that $f(z_0)=0$ for some $z_0\in D(0;\rho)$ but $f$ is not the zero function. Then,  there exists a smallest $k \geq 1$ satisfying $f^{(k)}(z_0)\neq 0$ and also a convergent power series $g(z)=\sum b_n(z-z_0)^n$ within $D(z_0;\delta)$, for some $\delta >0$, such that we have the factorization
\[ f(z) =(z-z_0)^kg(z), \ \textrm{for all}\ z \in
D(z_0;\delta), \ \textrm{with}\  g\ \textrm{nowhere vanishing}.\]  
\end{corollary}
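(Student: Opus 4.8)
The plan is to use the Rearrangement Theorem (Theorem \ref{TEO76}) to re-expand $f$ as a power series centered at $z_0$, say $f(z)=\sum_{n\ge 0} c_n (z-z_0)^n$ for all $z\in D(z_0;\rho-|z_0|)$, and then invoke Corollary \ref{COR84}(a)--(b) to identify the coefficients: $c_n=f^{(n)}(z_0)/n!$. Since $f(z_0)=0$ we have $c_0=0$. Since $f$ is not the zero function, Lemma \ref{LEM86} guarantees that $f$ does not vanish on any disk around $z_0$, so not all $c_n$ are zero; hence there is a smallest index $k\ge 1$ with $c_k\neq 0$, equivalently a smallest $k\ge 1$ with $f^{(k)}(z_0)\neq 0$.

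Next I would factor out $(z-z_0)^k$ formally. Writing $w=z-z_0$, we have for $|w|<\rho-|z_0|$
\[ f(z)=\sum_{n\ge k} c_n w^n = w^k\sum_{n\ge k} c_n w^{n-k} = w^k \sum_{m\ge 0} c_{m+k} w^m. \]
Define $G(w)=\sum_{m\ge 0} c_{m+k} w^m$; this power series converges on $D(0;\rho-|z_0|)$ because it is obtained from the convergent series $\sum c_n w^n$ by deleting finitely many terms and dividing by $w^k$ (concretely, $|c_{m+k}|\,s^m = |c_{m+k}s^{m+k}|/s^{k}$, so absolute convergence at radius $s$ is inherited). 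In the variable $z$ this reads $g_0(z)=\sum_{m\ge 0} c_{m+k}(z-z_0)^m$, convergent on $D(z_0;\rho-|z_0|)$, and $f(z)=(z-z_0)^k g_0(z)$ there. Note $g_0(z_0)=c_k\neq 0$.

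Finally I would shrink the disk so that $g_0$ is nowhere zero. By the Remark following Definition \ref{DEF81} (or the easy fact that $f'(z_0)$ existing implies continuity), $g_0$ is continuous at $z_0$ because it is a convergent power series there; since $g_0(z_0)=c_k\neq 0$, there is $\delta>0$ with $\delta\le \rho-|z_0|$ such that $g_0(z)\neq 0$ for all $z\in D(z_0;\delta)$. Setting $g=g_0$ restricted to $D(z_0;\delta)$ and $b_n=c_{n+k}$ gives exactly the claimed factorization $f(z)=(z-z_0)^k g(z)$ on $D(z_0;\delta)$ with $g$ a convergent power series centered at $z_0$ that vanishes nowhere on that disk.

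The only genuinely delicate point is justifying that $k$ exists, i.e.\ that the translated coefficients $c_n$ are not all zero; this is precisely where Lemma \ref{LEM86} is needed, since without it one could only rule out $c_n=0$ for all $n$ via a separate identity-principle argument. Everything else is bookkeeping: the coefficient identification is Corollary \ref{COR84}, the convergence of $G$ on the same disk is the trivial observation that deleting finitely many terms and factoring a monomial preserves the radius of convergence, and the nonvanishing of $g$ near $z_0$ is continuity plus $g(z_0)\neq 0$.
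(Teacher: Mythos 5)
Your proposal is correct and follows essentially the same route as the paper's proof: rearrangement about $z_0$, identification of the coefficients as $f^{(n)}(z_0)/n!$, Lemma \ref{LEM86} to rule out all coefficients vanishing, factoring out $(z-z_0)^k$, and continuity of the cofactor at $z_0$ to obtain the nonvanishing disk. Your explicit justification that the factored series converges on the same disk is a small point the paper leaves implicit, but the argument is otherwise identical.
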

\begin{proof} Employing Theorem \ref{TEO76} (the Rearrangement Theorem) and Corollary \ref{COR84} we can write the development $f(z) = \sum_{n=0}^{+\infty} f^{(n)}(z_0)(z-z_0)^n/n!$, where $z \in D(z_0;\rho')$, for some $\rho'>0$. Since $f$ is not identically zero over $D(0;\rho)$, from Lemma ~\ref{LEM86}  we infer that $f$ is not identically zero over $D(z_0;\rho')$. Therefore, since $f^{(0)}(z_0)=f(z_0)=0$, there exists the smallest $k\geq 1$ such that $f^{(k)}(z_0)\neq 0$. Putting $b_n= f^{(n)}(z_0)/n!$, where $n\in\mathbb N$, we arrive at
\[f(z)=\sum\limits_{n= 0}^{+\infty}b_n(z-z_0)^n= (z-z_0)^k\big[b_k + b_{k+1}(z-z_0) + b_{k+2}(z-z_0)^2 + \cdots \big], \ \textrm{if}\ z \in D(z_0;\rho').\]
The function $g(z) = \sum_{j=0}^{+\infty}b_{k+j}(z-z_0)^j$, where $z \in D(z_0;\rho')$, satisfies 
 \[f(z)=(z-z_0)^k g(z), \ \textrm{for all}\ z \in D(z_0;\rho'), \  \textrm{and} \  g(z_0)=b_k \neq 0.\]
Finally, we choose $\delta$, with $0<\delta <\rho'$, such that we have $g(z)\neq 0$ if $z \in D(z_0;\delta)$.
\end{proof}

\newpage

It is well-known that a polynomial of order $n$ is determined by its values at $n+1$ distinct points. The following result is a similar one, for power series.

\begin{corollary} \label{COR88}({\sf Identity Principle)} Let $\sum a_nz^n$ and $\sum b_nz^n$ be convergent in $D(0;r)$. Let $X$ be a subset of $D(0;r)$ such that $X$ has an accumulation point in $D(0;r)$. If the identity $\sum a_nz^n=\sum b_nz^n$ holds for all $z \in X$, then we have $a_n=b_n$ for all $n\in \mathbb N$. 
\end{corollary}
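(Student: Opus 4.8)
The plan is to reduce the Identity Principle to the Principle of Isolated Zeros (Corollary~\ref{COR87}) applied to the difference $h(z) = \sum a_n z^n - \sum b_n z^n = \sum (a_n - b_n) z^n$, which is itself a power series convergent in $D(0;r)$ by the remarks on addition of power series. The hypothesis says $h$ vanishes on the set $X$. First I would observe that it suffices to prove $h$ is the zero function, since then Corollary~\ref{COR84}(c) (the uniqueness theorem for coefficients) forces $a_n - b_n = 0$ for all $n$, which is exactly the conclusion.

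The heart of the argument is a localization step at the accumulation point. Let $p \in D(0;r)$ be an accumulation point of $X$. I would argue by contradiction: suppose $h$ is not identically zero. Then $h(p)$ is either zero or nonzero. If $h(p) \neq 0$, then by continuity of $h$ (each power series is continuous, being complex-differentiable by Theorem~\ref{TEO83}) there is a disk around $p$ on which $h$ does not vanish, contradicting that $p$ is an accumulation point of the zero set $X$ of $h$; so $h(p) = 0$. Now apply Corollary~\ref{COR87} to $h$ at the point $z_0 = p$: since $h(p) = 0$ and (by assumption) $h$ is not the zero function, there is a $k \geq 1$, a $\delta > 0$, and a power series $g$ convergent in $D(p;\delta)$ with $g$ nowhere vanishing there, such that $h(z) = (z-p)^k g(z)$ for $z \in D(p;\delta)$. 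But then for $z \in D(p;\delta)$ with $z \neq p$ we have $h(z) = (z-p)^k g(z) \neq 0$, so the only zero of $h$ inside $D(p;\delta)$ is $p$ itself. This contradicts $p$ being an accumulation point of $X \subset \{z : h(z) = 0\}$, because every punctured disk $D(p;\varepsilon) \setminus \{p\}$ must contain a point of $X$, hence a zero of $h$ other than $p$. Therefore $h$ is identically zero on $D(0;r)$, and Corollary~\ref{COR84}(c) completes the proof.

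The main obstacle to watch for is a subtle one about where Corollary~\ref{COR87} is being applied: that corollary is stated for a power series centered at the origin with a zero $z_0$ in its disk of convergence, and its conclusion produces a local expansion of $h$ near $z_0$. One must make sure the rearrangement/recentering machinery (Theorem~\ref{TEO76}) indeed lets us treat $p$ on the same footing as the origin — but this is already absorbed into the proof of Corollary~\ref{COR87}, which invokes Theorem~\ref{TEO76} internally, so no extra work is needed: I can cite Corollary~\ref{COR87} as a black box. The only genuinely new ingredient beyond quoting earlier results is the topological observation that an isolated zero cannot be an accumulation point of a set of zeros, which is immediate from the definition of accumulation point given in the Preliminaries.
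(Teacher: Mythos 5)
Your proposal is correct and follows essentially the same route as the paper: reduce to the difference series $\sum (a_n-b_n)z^n$, use continuity to see it vanishes at the accumulation point, invoke the Principle of Isolated Zeros (Corollary~\ref{COR87}) to derive a contradiction if the difference is not identically zero, and finish with the uniqueness of coefficients. No gaps; your remark that $X$ need only be contained in the zero set is a small point the paper glosses over, but the argument is the same.
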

\begin{proof} The subtraction $\sum a_nz^n - \sum b_nz^n = \sum (a_n-b_n)z^n$ shows that without loss of generality we can assume $b_n=0$, for all $n \in \mathbb N$. Hence, let us suppose that $z_0$ is an accumulation point of $X=\{z: f(z)=\sum a_nz^n=0\}$. By the continuity of $f$ we obtain $f(z_0)=0$. If $f$ is not the zero function, then through the principle of isolated zeros (Corollary \ref{COR87}) we deduce that $z_0$ is the only zero of $f$ in some $D(z_0;r')$, where $r'>0$, contradicting our assumption about $z_0$. Therefore, we have $f(z) = \sum a_nz^n = 0$, for all $z \in D(0;r)$, and $a_n=f^{(n)}(0)/n!=0$, for all $ n \in \mathbb N$. 
\end{proof}

Next, we prove a particular result about the  complex binomial series.

\begin{proposition}\label{PROP89} ({\sf Binomial Series}) Let $p\in \mathbb N\setminus\{0\}$. Then $B(z)= \sum_{n=0}^{+\infty}\binom{1/p}{n}z^n$ converges in the open disk $D(0;1)$ and $B(z)$ is a pth root of $1+z$, with $z \in D(0;1)$. That is, we have $B(z)^p=1+z$ for every $z\in D(0;1)$. 
\end{proposition}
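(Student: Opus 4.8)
The plan is to verify the convergence of $B(z)$ on $D(0;1)$ and then to show $B(z)^p = 1+z$ by a bootstrapping argument that exploits the algebraic identity $\binom{1/p}{n}$ satisfies together with the Cauchy product formula for $p$th powers (Corollary \ref{COR78}).

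\emph{Convergence.} First I would show the radius of convergence is exactly $1$. Writing $a_n = \binom{1/p}{n} = \frac{(1/p)(1/p-1)\cdots(1/p-n+1)}{n!}$, one has $\left|\frac{a_{n+1}}{a_n}\right| = \frac{|1/p - n|}{n+1} \to 1$ as $n \to \infty$, so by the ratio test the series $\sum |a_n| r^n$ converges for $r<1$ and diverges for $r>1$; hence $\rho = 1$ and, by Theorem \ref{TEO72}, $B(z) = \sum \binom{1/p}{n} z^n$ converges absolutely on $D(0;1)$.

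\emph{The $p$th power identity.} By Corollary \ref{COR78}, on $D(0;1)$ we have $B(z)^p = \sum c_n z^n$ where $c_n = \sum_{n_1 + \cdots + n_p = n} \binom{1/p}{n_1}\cdots\binom{1/p}{n_p}$. The Vandermonde-type convolution identity $\sum_{n_1+\cdots+n_p = n}\binom{1/p}{n_1}\cdots\binom{1/p}{n_p} = \binom{p\cdot(1/p)}{n} = \binom{1}{n}$ gives $c_0 = 1$, $c_1 = 1$, and $c_n = 0$ for $n \geq 2$, so $B(z)^p = 1+z$. To keep the proof elementary and self-contained (avoiding a generating-function proof of Vandermonde over a formal ring, which the paper's philosophy discourages), I would instead argue directly: set $F(z) = B(z)^p$, which by Corollary \ref{COR78} and Theorem \ref{TEO83} is a power series on $D(0;1)$, differentiable term by term. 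Using the term-by-term differentiation of $B$, one checks the coefficient recursion $(n+1)\binom{1/p}{n+1} = (1/p - n)\binom{1/p}{n}$, equivalently $B'(z) = \frac{1}{p}\cdot\frac{B(z) - z B'(z)}{1+z}$ is awkward; cleaner is the differential relation $(1+z)B'(z) = \tfrac{1}{p} B(z)$, which follows directly from the coefficient recursion by comparing coefficients of $z^n$ on both sides. Then $F'(z) = p B(z)^{p-1} B'(z)$, so $(1+z) F'(z) = p B(z)^{p-1}(1+z)B'(z) = p B(z)^{p-1}\cdot \tfrac{1}{p}B(z) = B(z)^p = F(z)$. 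Thus $F$ satisfies $(1+z)F'(z) = F(z)$ with $F(0) = B(0)^p = 1$.

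\emph{Conclusion from the ODE.} Finally I would show that the only power series solution on $D(0;1)$ of $(1+z)F'(z) = F(z)$ with $F(0)=1$ is $1+z$. Writing $F(z) = \sum d_n z^n$, comparing coefficients in $(1+z)\sum (n+1)d_{n+1}z^n = \sum d_n z^n$ yields $d_{n+1}(n+1) + d_n \cdot n = d_n$, i.e. $(n+1)d_{n+1} = (1-n)d_n$; with $d_0 = 1$ this forces $d_1 = 1$ and $d_n = 0$ for all $n \geq 2$. Hence $F(z) = 1+z$, and by the Uniqueness Theorem for coefficients (Corollary \ref{COR84}(c), applied to $F(z) - (1+z)$) the identity $B(z)^p = 1+z$ holds throughout $D(0;1)$. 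The main obstacle is establishing the differential relation $(1+z)B'(z) = \tfrac1p B(z)$ cleanly from the binomial coefficient recursion and then chaining it through $F = B^p$; everything else is a routine coefficient comparison, and the term-by-term differentiation is licensed by Theorem \ref{TEO83} while the product rule for complex-differentiable functions follows from Definition \ref{DEF81} by the standard argument.
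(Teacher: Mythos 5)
Your proof is correct, but it takes a genuinely different route from the paper's. The paper imports the real binomial expansion $(1+x)^{1/p}=\sum_{n\ge 0}\binom{1/p}{n}x^n$ on $(-1,1)$ (and its divergence for $|x|>1$) as known facts, deduces from Theorem \ref{TEO72} that $B$ converges on $D(0;1)$, notes via Corollary \ref{COR78} that $B(z)^p$ is a power series on $D(0;1)$ agreeing with $1+x$ on the real interval $(-1,1)$, and then invokes the Identity Principle (Corollary \ref{COR88}) to propagate $B(z)^p=1+z$ to the whole disk. You stay entirely inside the complex theory: the ratio test gives convergence on $D(0;1)$; the coefficient recursion $(n+1)\binom{1/p}{n+1}=(1/p-n)\binom{1/p}{n}$ together with term-by-term differentiation (Theorem \ref{TEO83}) yields $(1+z)B'(z)=\tfrac1p B(z)$; and the product rule turns this into $(1+z)F'(z)=F(z)$ for $F=B^p$, whose coefficient recursion with $F(0)=1$ forces $F=1+z$. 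Your version is more self-contained --- it needs neither the real binomial theorem nor the Identity Principle (which rests on the Principle of Isolated Zeros and Lemma \ref{LEM86}), only the lighter uniqueness of coefficients, Corollary \ref{COR84}(c) --- at the cost of requiring the product rule for complex derivatives, which the paper never states but which is indeed immediate from Definition \ref{DEF81}. Two small points to tidy up: for $p=1$ the coefficients $\binom{1}{n}$ vanish for $n\ge 2$, so the ratio test as written does not apply (that case is trivial, since then $B(z)=1+z$ outright); and the ``coefficient comparison'' in $(1+z)F'(z)=F(z)$ already uses Corollary \ref{COR84}(c) at that step, not only at the end, so you should cite it there.
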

\begin{proof} If $x$ is a real number, with $-1< x<1$, then the validity of the formula $b(x)=(1+x)^\frac{1}{p}=\sum_{n=0}^{+\infty}\binom{1/p}{n}x^n$ is 
well-known. It is also known that the real series $\sum_{n=0}^{+\infty}\binom{1/p}{n}x^n$ diverges if $|x|>1$. Hence, by Theorem ~\ref{TEO72} (a) the complex power series
$B(z)=\sum_{n=0}^{+\infty}\binom{1/p}{n}z^n$, where $z \in \mathbb C$, has radius of convergence $\rho=1$. By Corollary ~\ref{COR78}, 
the function $B(z)^p$ is expressible as a convergent power series inside $D(0;1)$. Clearly, we have $B(x)^p=b(x)^p=1+x$, if $-1<x<1$. The claim therefore follows from the identity principle (Corollary ~\ref{COR88}).
\end{proof}

As a final result, we enunciate the {\sf Inverse Function Theorem for Power Series}. {\sl Let us suppose that $f(z)=\sum_{n=1}^{+\infty}a_nz^n$ converges in $D(0;\rho)$, where $\rho >0$ and $f'(0)=a_1\neq 0$. Then, there exists a unique power series $g(z)=\sum_{m=1}^{+\infty}b_mz^m$ that converges in some disk $D(0;\delta)$, with $\delta >0$, and satisfies $f(g(z))=z$, for all $z \in D(0;\delta)$.} For a power series proof of this theorem we refer the reader to Knopp ~\cite[pp.~184--188]{KK} (see also Cartan ~\cite[pp.~26--27]{CA} and Lang ~\cite[pp.~76--79]{LA}). Keeping all the hypothesis on the power series $f(z)$, for a rather short and easy power series proof that {\sl $f$ is inversible in a small disk $D(0;\rho')$, for some $\rho' >0$, and its correspondent local inverse is a complex-differentiable function at every point in its domain}, we refer the reader to de Oliveira ~\cite[Theorem 8.1]{OO3}.

\

\paragraph{\bf Acknowledgments.} I would like to sincerely thank Professors J. Aragona, Paulo A. Martin, and R. B. Burckel for their comments and suggestions. Any possible slips and mistakes are my responsibility.

\bibliographystyle{amsplain}

\end{document}